\documentclass{amsart}
\usepackage{amsmath, amsthm, amscd, amsfonts, amssymb}

\makeatletter \oddsidemargin.9375in \evensidemargin \oddsidemargin
\marginparwidth1.9375in \makeatother

\textwidth = 16 cm \textheight = 22 cm \oddsidemargin = 0.0 cm
\evensidemargin = 0.0 cm \topmargin = -1 cm
\parskip = 0.1in
\parindent = 0.1in



\newtheorem{thm}{Theorem}[section]
\newtheorem{lem}[thm]{Lemma}
\newtheorem{pro}[thm]{Proposition}
\newtheorem{cor}[thm]{Corollary}
\theoremstyle{definition}

\theoremstyle{remark}
\newtheorem{rem}[thm]{Remark}
\numberwithin{equation}{section}

\begin{document}
	\title[Module and Hochschild  cohomology]{Module and Hochschild  cohomology of certain semigroup algebras}
	\author[A. Shirinkalam, A. Pourabbas and M. Amini]{ A. Shirinkalam,  A. Pourabbas and M. Amini}

	\address{Faculty of Mathematics and Computer
		Science, Amirkabir University of Technology, 424 Hafez Avenue,
		Tehran 15914, Iran}
	\email{shirinkalam\_a@aut.ac.ir}
		\email{arpabbas@aut.ac.ir}
		\address{Department of Mathematics, Tarbiat Modares University, P.O Box 14115-175, Tehran, Iran}
		\address{School of Mathematics, Institute for Research in Fundamental Sciences (IPM), P.O.Box: 19395-5746, Tehran, Iran}
	\email{mamini@modares.ac.ir, mamini@ipm.ir}

	\subjclass[2010]{Primary: 46H20; Secondary: 43A20, 43A07}
	
	\keywords{Module cohomology group, Hochschild cohomology group, inverse semigroup,  semigroup algebra, bicyclic semigroup}

\begin{abstract}
We study the relation between module and Hochschild cohomology groups of Banach algebras with a compatible module structure. More precisely, we show that for every commutative Banach
 $ \mathcal{A} $-$  \mathfrak{A}$-bimodule $ X $ and  every $ k  \in \mathbb{N}$, the seminormed spaces
 $ \mathcal{H}^{k}_{ \mathfrak{A}} ( \mathcal{A},X^*)$ and  $ \mathcal{H}^k ( \frac{\mathcal{A}}{J}, X^*) $ are isomorphic, where $ J $ is the closed ideal of $ \mathcal{A} $ generated by the elements of the form $ a (\alpha \cdot b)-(a\cdot \alpha)b$ with $ a,b \in \mathcal{A} $ and $\alpha \in \mathfrak{A}. $

As an example, we calculate the module cohomologies of inverse semigroup algebras with coefficients in some related function algebras. In particular, we show that for an inverse semigroup   $ S $   with  the set of idempotents $ E $, when $\ell^1(E)  $ acts on $\ell^1(S)  $ by multiplication from right and trivially from left, the first module cohomology
$\mathcal{H}^1_{\ell^1(E)} (\ell^1(S), \ell^1(G_S)^{(2n+1)})$ is trivial for each $ n \in \mathbb{N} $. As a consequence we conclude that the second module cohomology  $\mathcal{H}^2_{\ell^1(E)} (\ell^1(S),\ell^1(G_S)^{(2n+1)})$ is a Banach space, where $ G_S $ is the maximal group homomorphic image of $ S $.
\keywords{Module cohomology group\and Hochschild cohomology group\and inverse semigroup\and  semigroup algebra\and bicyclic semigroup}

\end{abstract}
\maketitle
\section{Introduction}
\label{intro}
Johnson in his seminal work  \cite{J} showed that a locally compact group $ G $ is amenable  if and only if the group algebra $ L^1(G) $ is amenable as a Banach algebra.
The analogous results on amenability of semigroup algebras were first obtained by Duncan and Namioka in \cite{dunk}. They  showed that for a discrete  inverse semigroup $S$ with set of idempotents $E$, amenability of $ \ell^1(S) $ implies that $ E $ is finite and all maximal subgroups of $S$ are amenable (see \cite{DLS} for more general results in this direction).

In an attempt to recover a version of Johnson's result for (inverse) semigroups, the concept of module amenability introduced by the third author in \cite{A}.
He showed that for an inverse semigroup $ S $ with set of idempotents $E$, if $ \ell^1(S) $ is considered as a module over $ \ell^1(E) $ (with trivial left action and multiplication as the right action) then module  amenability of the semigroup algebra $ \ell^1(S) $ (see below for a definition) is equivalent to amenability of $S$. On the other hand, Bowling and Duncan devote part of \cite{BD2} to calculate the cohomology groups of   $ \ell^1(S) $ with coefficients in certain function spaces on $S$, including the algebra  $ \ell^\infty(S) $ of bounded functions on $S$. The most interesting case is when the cohomology is trivial.

The second cohomology group of a Banach algebra involves information about splitting of extensions of the underlying Banach algebra \cite{H}. When the higher cohomologies are not trivial, it is usually desirable to show that they are Banach spaces.
In \cite {I} Ivanov and in \cite {MM} Matsumoto and Morita showed that $ \mathcal{H}^2(\ell^1(G) , \mathbb{C}) $ is a Banach space. Choi, Gourdeau  and White in \cite{CGW}  showed that for a band semigroup $ S $, that is, a semigroup all of whose elements are idempotent, the simplicial cohomology of $ \ell^1(S) $ in each degree is trivial. For a semilattice $ S $, that is, a commutative semigroup $ S $ in which $ s^2=s $ for every $ s \in S $, Dales and Duncan \cite{DD} showed that $ \mathcal{H}^{2}(\ell^1(S),X) =0$, for every Banach $ \ell^1(S) $-bimodule $ X $.

 Finally, Gourdeau, Pourabbas and   White in \cite{GPW} showed that the second simplicial cohomology group of $ \ell^1(S) $ is a Banach space, for   a Clifford   Rees semigroup $S$, and in particular this is the case for $ \mathcal{H}^{2}(\ell^1(N_a),\ell ^\infty (N_a) )  $ where  $ N_a=\lbrace n\in \mathbb{Z}_+ : n \geq a \rbrace $ with $ a  >0 $. As an example on even higher cohomologies, it is shown for a semilattice $ S $ in \cite{GPW} that  $ \mathcal{H}^{3}(\ell^1(S),X)  $ is a Banach space, for every Banach $ \ell^1(S) $-bimodule $ X $.

Motivated by the results of the third author in \cite{A}, the first and the second module cohomology group of inverse semigroup algebras were studied by Nasrabadi and the second author in  \cite{NP2} and \cite{NP}. They showed that for a commutative inverse semigroup $ S $ with the set of idempotents $ E $, when  $ \ell^1(E) $   acts on    $ \ell^1(S) $ by multiplication from both sides, the module cohomology  $ \mathcal{H}^1_{ \ell^1(E)} ( \ell^1(S),\ell^1(S)^{(2n+1)}) $ is trivial, for each $ n \in \mathbb{N} $. Also they showed that for a  commutative or Clifford semigroup $ S $, $ \mathcal{H}^2_{ \ell^1(E)} ( \ell^1(S),\ell^1(S)^{(2n+1)}) $ is a Banach space.

The present paper is motivated by the last two cited papers and is the first attempt to systematically investigate the relation between classical and module cohomologies. We show that, under certain conditions, satisfied by a class of examples, these cohomologies, with appropriate choice of coefficients,  are the same. This in particular allows us to drop the commutativity assumption in the above mentioned results of \cite{NP2} and \cite{NP}, when the action is assumed to be trivial from left.
The paper is organized as follows. In section 1, we give basic definitions and recall certain preliminary facts.
In section 2, we investigate the relation between the classical (Hochschild) and module cohomologies for a Banach algebra with a compatible module action of another Banach algebra.
In section 3, we use the main result of section 2 to show that for an inverse semigroup $ S $ with set of idempotents $E$ and  maximal group homomorphic image $ G_S $, the first module cohomology
$ \mathcal{H}^1_{ \ell^1(E)} ( \ell^1(S),( \ell^1(G_S) )^{(2n+1)})  $
is trivial, and the second module cohomology  $ \mathcal{H}^2_{ \ell^1(E)} ( \ell^1(S),( \ell^1(G_S) )^{(2n+1)}) $ is a Banach space, for each $ n \in \mathbb{N} $.

    We denote by $\otimes $ and $ \hat{\otimes} $, the algebraic and projective tensor product of Banach spaces (algebras), respectively. Also $ \mathcal{L}^n(X,Y) $ (resp. $ \mathcal{L}^n_{ \mathfrak{A}}(X,Y) $) denotes the space of all bounded $  n$-linear (resp. module) maps from $ X $ into $ Y $ (when $ X $ and $ Y $ are $  \mathfrak{A}$-bimodules).
 Let $ \mathcal{A} $ and $  \mathfrak{A}$ be Banach algebras   and let  $ \mathcal{A} $  be a Banach $  \mathfrak{A}$-bimodule. We say that the action of $  \mathfrak{A}$ on  $ \mathcal{A} $ is compatible, if for each $ \alpha \in \mathfrak{A} $ and $ a, b \in  \mathcal{A}$, we have
  $$ \alpha \cdot (ab)=(\alpha \cdot a)b, \quad (ab)\cdot \alpha =a(b \cdot \alpha). $$

 If $  \mathcal{A}$  is a  Banach $  \mathfrak{A}$-bimodule with  the compatible action, then so is the dual space $  \mathcal{A}^* $. Throughout the rest of the paper, we assume that $ \mathcal{A} $  is a Banach $  \mathfrak{A}$-bimodule with compatible actions.

Let $ X $ be a Banach $  \mathcal{A}$-bimodule and a Banach $  \mathfrak{A}$-bimodule such that for every $\alpha \in   \mathfrak{A}, a \in \mathcal{A}  $ and $ x \in X, $
\begin{eqnarray}\label{compatible} 
\alpha \cdot (a\cdot x)=(\alpha\cdot a)\cdot x, \quad a\cdot (\alpha \cdot x)=(a\cdot \alpha)\cdot x, \quad (\alpha \cdot x)\cdot a = \alpha\cdot (x\cdot a), 
\end{eqnarray}
 and the same for the right and two-sided actions. Then we say that $ X $ is a Banach
 $ \mathcal{A} $-$  \mathfrak{A}$-bimodule. If moreover, for each $ \alpha\in   \mathfrak{A}$ and $ x \in X $, $ \alpha\cdot x =x\cdot \alpha $, then $ X $ is called a commutative Banach
 $ \mathcal{A} $-$  \mathfrak{A}$-bimodule. In this case $ X^* $, the dual of $ X $ is also a commutative Banach
 $ \mathcal{A} $-$  \mathfrak{A}$-bimodule with the canonical action.
Let $ Y  $ be another Banach $ \mathcal{A} $-$  \mathfrak{A}$-bimodule, then an $ \mathcal{A} $-$  \mathfrak{A}$-bimodule morphism from $ X $ to $ Y $ is a bounded linear map $ \varphi :X \rightarrow Y $ which is module morphism with respect to both actions, that is,
$$\varphi (a \cdot x)=a\cdot \varphi (x), \quad \varphi (x\cdot a)=\varphi(x)\cdot a, \quad  \varphi (\alpha \cdot x)=\alpha\cdot \varphi (x), \quad \varphi (x\cdot \alpha)=\varphi(x)\cdot \alpha,  $$ for each $\alpha \in   \mathfrak{A}, a \in \mathcal{A}$ and $ x \in X.  $

\begin{rem}
When $ \mathcal{A} $ acts on itself by algebra multiplication, it is not in general a Banach $ \mathcal{A} $-$  \mathfrak{A}$-bimodule,
 since we have not assumed the condition
 \begin{eqnarray}\label{com} 
a  (\alpha \cdot b)= (a\cdot \alpha)b \quad(\alpha \in \mathfrak{A}, a,b \in \mathcal{A}).
\end{eqnarray}

 However, if  $ \mathcal{A} $ is a commutative  $  \mathfrak{A}$-bimodule, then the equation (\ref{com}) holds, so  $ \mathcal{A} $ is a Banach  $\mathcal{A} $-$  \mathfrak{A}$-bimodule.
 \end{rem}

    For a Banach algebra $ \mathcal{A} $, the projective tensor product $ \mathcal{A} \hat{\otimes}\mathcal{A}  $ is a Banach algebra with respect to the multiplication defined on elementary tensors by $$ (a\otimes b)(c\otimes d)=ac \otimes bd, $$ and extended by bi-linearity and continuity \cite {BD}. If  $ \mathcal{A} $ is a Banach  $  \mathfrak{A}$-bimodule, then
      $ \mathcal{A} \hat{\otimes}\mathcal{A}  $ becomes a Banach  $  \mathfrak{A}$-bimodule with the  canonical actions. Let $ I $ be the closed ideal of
       $ \mathcal{A} \hat{\otimes}\mathcal{A}  $ generated by the elements of the form
      $ a\cdot\alpha \otimes b - a\otimes \alpha\cdot b, $ for  $ \alpha \in \mathfrak{A}, a,b \in \mathcal{A} $.
The module projective tensor product $ \mathcal{A} \hat{\otimes}_{\mathfrak{A}}\mathcal{A} $ is the quotient $ \dfrac{\mathcal{A} \hat{\otimes}\mathcal{A}}{I} $ \cite {H}.
Let $ J $ be the closed ideal of $ \mathcal{A} $ generated by $ \pi (I),$ where  $ \pi : \mathcal{A} \hat{\otimes}\mathcal{A}  \rightarrow \mathcal{A} $ is the multiplication map.  This is the same as the closed linear span of  the set of elements of the form $ (a\cdot\alpha)  b - a( \alpha\cdot b ) \:(\alpha \in \mathfrak{A}, a,b \in \mathcal{A} ) $.  Moreover,
$ \mathcal{A} \hat{\otimes}_{\mathfrak{A}}\mathcal{A} $ and $ \frac{\mathcal{A}}{J} $
are $  \mathfrak{A}$-bimodules and the map $ \tilde{\pi}:\mathcal{A} \hat{\otimes}_{\mathfrak{A}}\mathcal{A}\rightarrow \frac{\mathcal{A}}{J}  $ defined by
$\tilde{\pi}(a\otimes b +I):=ab+J  $ extends to an $  \mathfrak{A}$-bimodule morphism \cite {Rif}.

Let $ X $ be a Banach
 $ \mathcal{A} $-$  \mathfrak{A}$-bimodule. A bounded linear map $ D:\mathcal{A} \rightarrow X $
 is called an $  \mathfrak{A}$-module derivation if it is a derivation and  an $  \mathfrak{A}$-module map. When $ X $ is a commutative $ \mathcal{A} $-$  \mathfrak{A}$-bimodule, for $ x\in X $, the  map $ \hbox{ad}_x:\mathcal{A} \rightarrow X  $
 defined by $ \hbox{ad}_x (a):=a\cdot x- x\cdot a $ is a module derivation, called an inner module derivation.

Let $ X $ be a commutative Banach
 $ \mathcal{A} $-$  \mathfrak{A}$-bimodule. The space of all  $  \mathfrak{A}$-module derivations from   $  \mathcal{A}$ to $ X $ is denoted by $ \mathcal{Z}^1_{ \mathfrak{A}} ( \mathcal{A},X)$ and the subspace of  inner module derivations is denoted by $ \mathcal{B}^1_{ \mathfrak{A}} ( \mathcal{A},X)$. The first $\mathfrak{A}$-module cohomology group of $ \mathcal{A} $ with coefficients in $ X $ is defined as the quotient seminormed space
$$\mathcal{H}^1_{ \mathfrak{A}} ( \mathcal{A},X):=\dfrac{\mathcal{Z}^1_{ \mathfrak{A}} ( \mathcal{A},X)}{\mathcal{B}^1_{ \mathfrak{A}} ( \mathcal{A},X)}.  $$
If $ \mathcal{B}^1_{ \mathfrak{A}} ( \mathcal{A},X) $ is closed in $ \mathcal{Z}^1_{ \mathfrak{A}} ( \mathcal{A},X) $ in the norm topology, then $ \mathcal{H}^1_{ \mathfrak{A}} ( \mathcal{A},X) $ is a Banach space. Similar condition applies to higher cohomologies (compare with \cite{NP}).
\section{Relation between module cohomology and Hochschild cohomology}
\label{sec:1}
Let
 $ \mathcal{A} $ and $  \mathfrak{A}$ and the closed ideal $J$ be as in the previous section, and  $ X $ be a Banach
 $ \mathcal{A} $-$  \mathfrak{A}$-bimodule. In this section, we provide sufficient conditions for $ \mathcal{H}^1_{ \mathfrak{A}} ( \mathcal{A},X)$ to be isomorphic to $ \mathcal{H}^1 ( \frac{\mathcal{A}}{J},X) $ as seminormed spaces.

 We say that  the action of $  \mathfrak{A}$  on $X$ is trivial from left, if
  for  every $ \alpha \in \mathfrak{A} $ and $ x \in X, \alpha \cdot x =f(\alpha)x$,
  where $ f $ is a character on  $ \mathfrak{A} $.

  \begin{rem}\label{r1}
If $ X $ is a Banach
 $ \mathcal{A} $-$  \mathfrak{A}$-bimodule, the equation (\ref{compatible}) shows that $ J\cdot X = X \cdot J = 0 $. In particular, any action of $ \mathcal{A} $ on $ X $ induces an action of $ \frac{\mathcal{A}}{J} $ on $ X $ given by $ (a+J)\circ x=a\cdot x$ and $ x\circ (a+J)=x\cdot a, $ for  $ a \in \mathcal{A} $ and $ x\in X $, turning  $ X $ into a Banach
 $ \frac{\mathcal{A}}{J} $-$  \mathfrak{A}$-bimodule. Conversely, if  $ X $ is a Banach
 $ \frac{\mathcal{A}}{J} $-$  \mathfrak{A}$-bimodule, then it is a Banach
 $ \mathcal{A} $-$  \mathfrak{A}$-bimodule with the obvious action.
\end{rem}

For the rest of this section, we assume that $\mathcal{A}  $ is a Banach $\mathfrak{A}$-bimodule and $ X $ is a commutative Banach
 $ \mathcal{A} $-$  \mathfrak{A}$-bimodule, both with a left trivial action of $\mathfrak{A}$, via the same character $f$ on  $ \mathfrak{A} $. We also assume that the Banach algebra $A/J$ is unital. In the next section, we provide examples satisfying all these conditions.

 \begin{pro}\label{main} The Banach spaces $\mathcal{Z}^1_{ \mathfrak{A}} ( \mathcal{A},X)$ and $\mathcal{Z}^1 ( \frac{\mathcal{A}}{J},X)  $ are
isometrically  isomorphic.
\end{pro}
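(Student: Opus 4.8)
The plan is to realise the isomorphism by the natural restriction--quotient correspondence. Let $q:\mathcal{A}\to\mathcal{A}/J$ be the quotient map. To an $\mathfrak{A}$-module derivation $D:\mathcal{A}\to X$ I associate $\tilde D:\mathcal{A}/J\to X$, $\tilde D(a+J):=D(a)$, and conversely to a bounded derivation $E:\mathcal{A}/J\to X$ (with $X$ regarded as an $\mathcal{A}/J$-bimodule via Remark \ref{r1}) I associate $E\circ q:\mathcal{A}\to X$. The first point to check is that $\tilde D$ is well defined, i.e.\ that $D$ vanishes on $J$. It suffices to evaluate $D$ on a generator $j=(a\cdot\alpha)b-a(\alpha\cdot b)$. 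Using that $D$ is a derivation and an $\mathfrak{A}$-module map,
\[
D\bigl((a\cdot\alpha)b\bigr)=(a\cdot\alpha)\cdot D(b)+\bigl(D(a)\cdot\alpha\bigr)\cdot b,\qquad
D\bigl(a(\alpha\cdot b)\bigr)=a\cdot\bigl(\alpha\cdot D(b)\bigr)+D(a)\cdot(\alpha\cdot b);
\]
here $a\cdot(\alpha\cdot D(b))=(a\cdot\alpha)\cdot D(b)$ by (\ref{compatible}), while, since $X$ is commutative with left $\mathfrak{A}$-action given by $f$ and the left $\mathfrak{A}$-action on $\mathcal{A}$ is also given by $f$, we have $\bigl(D(a)\cdot\alpha\bigr)\cdot b=f(\alpha)\,D(a)\cdot b=D(a)\cdot(\alpha\cdot b)$. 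Hence $D(j)=0$, so $D|_J=0$ by linearity and continuity and $\tilde D$ is a well-defined bounded linear map.

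Next I would verify that $\tilde D\in\mathcal{Z}^1(\mathcal{A}/J,X)$ and that the correspondence is isometric. The derivation identity for $\tilde D$ is immediate from $\tilde D((a+J)(b+J))=D(ab)=a\cdot D(b)+D(a)\cdot b$ and the definition of the $\mathcal{A}/J$-action on $X$. For the norms: if $\|a+J\|<1$, pick $j\in J$ with $\|a+j\|<1$; then $\|\tilde D(a+J)\|=\|D(a+j)\|\le\|D\|$, so $\|\tilde D\|\le\|D\|$, and $\|D(a)\|=\|\tilde D(a+J)\|\le\|\tilde D\|\,\|a\|$ gives the reverse inequality, so $\|\tilde D\|=\|D\|$. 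In the other direction, given $E\in\mathcal{Z}^1(\mathcal{A}/J,X)$ put $D:=E\circ q$; this is plainly a bounded derivation, and for the left $\mathfrak{A}$-action $D(\alpha\cdot a)=D(f(\alpha)a)=f(\alpha)D(a)=\alpha\cdot D(a)$. Since the assignments $D\mapsto\tilde D$ and $E\mapsto E\circ q$ are linear and mutually inverse ($\widetilde{E\circ q}=E$ and $(\tilde D)\circ q=D$), it only remains to see that $D=E\circ q$ respects the right $\mathfrak{A}$-action.

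This last point is the crux, and it is where unitality of $\mathcal{A}/J$ enters. Let $u\in\mathcal{A}$ be a representative of the identity of $\mathcal{A}/J$. Taking $b=u$ in the generator gives $(a\cdot\alpha)u-f(\alpha)au\in J$; combining this with $(a\cdot\alpha)u-(a\cdot\alpha)\in J$ and $f(\alpha)au-f(\alpha)a\in J$ yields $a\cdot\alpha-f(\alpha)a\in J$, i.e.\ $a\cdot\alpha+J=f(\alpha)(a+J)$. Therefore $D(a\cdot\alpha)=E\bigl(f(\alpha)(a+J)\bigr)=f(\alpha)D(a)=D(a)\cdot\alpha$, the last step using that the right $\mathfrak{A}$-action on $X$ is also multiplication by $f$ (by commutativity and left triviality). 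This completes the verification. I expect the main obstacle to be exactly the congruence $a\cdot\alpha\equiv f(\alpha)a\pmod J$: it fails without the unitality hypothesis, whereas everything else is routine manipulation of the compatibility axioms.
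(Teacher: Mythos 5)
Your proposal is correct and follows essentially the same route as the paper: the restriction--quotient correspondence $D\leftrightarrow \tilde D=D\circ q^{-1}$, the same two-sided norm estimate for the isometry, and the congruence $a\cdot\alpha\equiv f(\alpha)a\pmod J$ to get the right $\mathfrak{A}$-module property of $E\circ q$. The only difference is that you supply explicit verifications of two steps the paper merely asserts or cites --- that module derivations vanish on the generators of $J$, and that $a\cdot\alpha-f(\alpha)a\in J$ (which the paper takes from \cite[Lemma 3.1]{ABE}); both of your arguments are sound.
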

\begin{proof}
Consider the map $ \rho :\mathcal{Z}^1_{ \mathfrak{A}} ( \mathcal{A},X)\rightarrow \mathcal{Z}^1 ( \frac{\mathcal{A}}{J},X)  $ defined   by $ \rho(D)(a+J)=D(a), $ for  $ D \in  \mathcal{Z}^1_{ \mathfrak{A}} ( \mathcal{A},X)$. This is well defined, since each module derivation in $ \mathcal{Z}^1_{ \mathfrak{A}} ( \mathcal{A},X) $ vanishes on $ J $.
Moreover, for each  $ a \in \mathcal{A} $, $  \Vert a+J \Vert = \inf _{j \in J} \Vert a+j \Vert \leq \Vert a \Vert $, hence
$$ \Vert \rho(D) \Vert = \sup_{\Vert a+J \Vert \neq 0} \dfrac{\Vert D(a) \Vert}{\Vert a+J \Vert} \geq \sup_{\Vert a \Vert \neq 0} \dfrac{\Vert D(a) \Vert}{\Vert a  \Vert}= \Vert D \Vert. $$
Conversely, if $ \Vert a \Vert \leq 1 $, then for each $ \delta >0 $, there exists $ j \in J $ such that
$$ \Vert a+j \Vert < \Vert a+J \Vert+\delta \leq 1+ \delta. $$
 Hence
 $$ \Vert D \Vert \geq \dfrac{\Vert D(a+j) \Vert}{\Vert  a+j \Vert} \geq \dfrac{\Vert \rho(D)(a+J)  \Vert}{\Vert a+J   \Vert + \delta}. $$ Therefore, we have
 $$ \Vert \rho(D)(a+J) \Vert \leq \Vert D\Vert (\Vert a+J    \Vert  +\delta) \leq \Vert D \Vert (1+\delta).$$ Taking supremum,
 $  \Vert \rho(D) \Vert \leq \Vert D \Vert (1+\delta),$ thus $ \rho $ is an isometry. 
 
 Next we show that $ \rho $ is surjective. Suppose  $ D^\prime \in  \mathcal{Z}^1 ( \frac{\mathcal{A}}{J},X) $. Define $ D:\mathcal{A} \rightarrow X $  by $ D(a):=D^\prime (a+J) $, for  $ a \in \mathcal{A} $. Then $ D $ is bounded linear map and
 \begin{align*}
D(ab)
&= D^\prime (ab+J) \\
&=D^\prime ((a+J)(b+J))  \\
&= D^\prime(a+J)\circ(b+J) +(a+J)\circ D^\prime (b+J) \\
&=D(a)\cdot b+ a\cdot D(b)\quad \qquad \qquad \qquad\text{(by Remark \ref {r1})},
\end{align*}
Since the left actions of $ \mathfrak{A}$  on both $ \mathcal{A} $ and $ X $  are  trivial,
$$D(\alpha \cdot a) = D  (f(\alpha)a) =f(\alpha) D(a) =\alpha \cdot D(a), $$
for  $ \alpha \in \mathfrak{A} $.
Also by
\cite [Lemma 3.1]{ABE},
  $ f(\alpha)a-a\cdot\alpha \in J $, thus $ f(\alpha)a +J = a\cdot\alpha+ J $, for  $ \alpha \in \mathfrak{A}, a \in  \mathcal{A}$. Therefore
\begin{align*}
D(a\cdot \alpha)
&= D^\prime  (a \cdot \alpha +J) \\
&=D^\prime (f(\alpha)a+J)  \\
&=\alpha \cdot D(a)  \\
&= D(a) \cdot \alpha,
\end{align*}
and $ D $ is an  $  \mathfrak{A}$-module derivation.
Now by the definition,  $ \rho (D)=D^\prime, $
hence $ \rho $ is surjective.
\end{proof}

Let $ R $ be a ring and let $ X $ and $ Y $ be $ R $-modules and $ f:X \rightarrow Y $ be an $ R $-module homomorphism. Suppose that  $ X^\prime $ and $ Y^\prime $ are submodules of $ X $ and $ Y $, respectively. If $ f( X^\prime) \subseteq Y^\prime $, then
 $ f $ induces an $ R $-module homomorphism $ \tilde{f}: \frac{X}{ X^\prime}\rightarrow\frac{Y}{Y^\prime} $ given by $\tilde{f} (a+X^\prime) = f(a)+ Y^\prime.   $ If $ f $ is an epimorphism such that $ f(X^\prime)=Y^\prime  $ and ker$ f \subseteq  X^\prime$, then $ \tilde{f} $ is an $ R $-module isomorphism \cite [Corolarry IV.1.8]{sh}. A topological version of this observation holds for Banach modules.

\begin{thm}\label{main1}
The seminormed  spaces
$\mathcal{H}^1_{ \mathfrak{A}} ( \mathcal{A},X)$ and $\mathcal{H}^1 ( \frac{\mathcal{A}}{J},X)  $ are isomorphic.
\end{thm}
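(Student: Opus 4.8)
The plan is to upgrade the isometric isomorphism $\rho$ of Proposition \ref{main} to the level of cohomology. Since $\mathcal{H}^1_{\mathfrak{A}}(\mathcal{A},X)$ and $\mathcal{H}^1(\frac{\mathcal{A}}{J},X)$ are the quotients of $\mathcal{Z}^1_{\mathfrak{A}}(\mathcal{A},X)$ and $\mathcal{Z}^1(\frac{\mathcal{A}}{J},X)$ by the respective spaces of inner derivations, it suffices to show that the bijection $\rho$ carries $\mathcal{B}^1_{\mathfrak{A}}(\mathcal{A},X)$ exactly onto $\mathcal{B}^1(\frac{\mathcal{A}}{J},X)$, and then to invoke the topological version of the quotient lemma recalled just before the statement (the Banach-module analogue of [Corollary IV.1.8]{sh}).

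First I would record the correspondence on coboundaries. Fix $x\in X$. The inner module derivation $\mathrm{ad}_x\in\mathcal{B}^1_{\mathfrak{A}}(\mathcal{A},X)$ is $a\mapsto a\cdot x-x\cdot a$, so by the definition of $\rho$ and the induced $\frac{\mathcal{A}}{J}$-action of Remark \ref{r1},
$\rho(\mathrm{ad}_x)(a+J)=\mathrm{ad}_x(a)=a\cdot x-x\cdot a=(a+J)\circ x-x\circ(a+J)$,
which is precisely the inner derivation of $\frac{\mathcal{A}}{J}$ determined by $x$. Conversely, every element of $\mathcal{B}^1(\frac{\mathcal{A}}{J},X)$ has this form for some $x\in X$ and hence equals $\rho(\mathrm{ad}_x)$. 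Thus $\rho\bigl(\mathcal{B}^1_{\mathfrak{A}}(\mathcal{A},X)\bigr)=\mathcal{B}^1(\frac{\mathcal{A}}{J},X)$.

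Next, $\rho:\mathcal{Z}^1_{\mathfrak{A}}(\mathcal{A},X)\to\mathcal{Z}^1(\frac{\mathcal{A}}{J},X)$ is a bounded linear bijection (indeed an isometry) with $\ker\rho=\{0\}\subseteq\mathcal{B}^1_{\mathfrak{A}}(\mathcal{A},X)$, and we have just shown $\rho(\mathcal{B}^1_{\mathfrak{A}}(\mathcal{A},X))=\mathcal{B}^1(\frac{\mathcal{A}}{J},X)$. Applying the topological quotient lemma, $\rho$ induces a bounded linear bijection
$\tilde{\rho}:\mathcal{H}^1_{\mathfrak{A}}(\mathcal{A},X)\to\mathcal{H}^1(\frac{\mathcal{A}}{J},X)$,
$\tilde{\rho}\bigl(D+\mathcal{B}^1_{\mathfrak{A}}(\mathcal{A},X)\bigr)=\rho(D)+\mathcal{B}^1(\frac{\mathcal{A}}{J},X)$,
whose inverse is induced by $\rho^{-1}$; hence the two quotient seminormed spaces are isomorphic. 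Since $\rho$ is isometric and maps $\mathcal{B}^1_{\mathfrak{A}}(\mathcal{A},X)$ bijectively onto $\mathcal{B}^1(\frac{\mathcal{A}}{J},X)$, a direct computation with the quotient seminorms shows $\tilde{\rho}$ is in fact isometric, giving slightly more than claimed.

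There is no serious obstacle here: the analytic content was already absorbed into Proposition \ref{main}. The only point requiring a little care is the passage to quotients of \emph{seminormed} (not normed) spaces — namely checking that $\tilde{\rho}$ is well defined, injective, surjective, and bicontinuous for the quotient seminorms — but all four follow formally once the equality $\rho(\mathcal{B}^1_{\mathfrak{A}}(\mathcal{A},X))=\mathcal{B}^1(\frac{\mathcal{A}}{J},X)$ is established, so the remaining task is essentially bookkeeping.
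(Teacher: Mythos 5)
Your proposal is correct and follows essentially the same route as the paper: both transport the isometry $\rho$ of Proposition \ref{main} to the quotients by checking that inner module derivations on $\mathcal{A}$ correspond exactly to inner derivations on $\frac{\mathcal{A}}{J}$ (the paper verifies surjectivity and injectivity of the induced map $\Phi$ directly, which is the same content as your equality $\rho(\mathcal{B}^1_{\mathfrak{A}}(\mathcal{A},X))=\mathcal{B}^1(\frac{\mathcal{A}}{J},X)$ plus the quotient lemma), and both conclude with the same seminorm computation showing the induced map is isometric.
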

\begin{proof}
The isomorphism $ \rho $ in the above proposition maps the inner module derivations on $ \mathcal{A} $ into the inner derivations on $\frac{\mathcal{A}}{J}  $, and so induces a  surjective map
$$ \Phi: \dfrac{\mathcal{Z}^1_{ \mathfrak{A}} ( \mathcal{A},X)}{\mathcal{B}^1_{ \mathfrak{A}} ( \mathcal{A},X)} \rightarrow \dfrac{\mathcal{Z}^1 ( \frac{\mathcal{A}}{J},X)}{\mathcal{B}^1 ( \frac{\mathcal{A}}{J},X)}, $$
given by $$ \Phi (D + \mathcal{B}^1_{ \mathfrak{A}} ( \mathcal{A},X)):= \rho (D)+ \mathcal{B}^1 ( \frac{\mathcal{A}}{J},X).$$ Let us show that  $ \Phi $ is injective. Suppose $ D \in \mathcal{Z}^1_{ \mathfrak{A}} ( \mathcal{A},X) $ is such that $ \rho (D)\in \mathcal{B}^1 ( \frac{\mathcal{A}}{J},X) $. Then
  there exists an element $ x \in X $ such that $\rho (D)(a+J)=(a+J) \circ x- x \circ (a+J) $, for each $a \in \mathcal{A},$
 thus, by Remark \ref {r1},  $$  D(a)= \rho (D)(a+J)=(a+J) \circ x- x \circ (a+J)=a\cdot x -x \cdot a,$$
  that is, $ D \in \mathcal{B}^1_{ \mathfrak{A}} ( \mathcal{A},X) $.  Next we show that  $ \Phi $ is an isometry. Consider  the seminorms $ \varrho $ and $ \sigma $, respectively  on $\mathcal{H}^1_{ \mathfrak{A}} ( \mathcal{A},X),  $ and  $\mathcal{H}^1 ( \frac{\mathcal{A}}{J},X)  $  given by
   $$\varrho(D+\mathcal{B}^1_{ \mathfrak{A}} ( \mathcal{A},X)) := \inf _{x \in X}  \Vert D+ \hbox{ad}_x      \Vert, $$ and $$\sigma( \rho(D)+\mathcal{B}^1 ( \frac{\mathcal{A}}{J},X)) := \inf _{x \in X}  \Vert   \rho(D)+ \tilde{\hbox{ad}}_x     \Vert, $$  where ${\hbox{ad}}_x$ and  $\tilde{\hbox{ad}}_x$ are the corresponding inner (module) derivations on $\mathcal A$ and $\mathcal A/J$, respectively. Then,
    \begin{align*}
\varrho(D+\mathcal{B}^1_{ \mathfrak{A}} ( \mathcal{A},X))
&=\inf _{x \in X}  \Vert D+ \hbox{ad}_x      \Vert \\
&=\inf _{x \in X}  \Vert \rho (D+ \hbox{ad}_x  )    \Vert \qquad (\rho \text{ is an isometry})  \\
&=\inf _{x \in X}  \Vert \rho (D)+ \tilde{\hbox{ad}}_x    \Vert   \\
&=\sigma( \rho (D)+\mathcal{B}^1 ( \frac{\mathcal{A}}{J},X))\\
&=\sigma(\Phi (D+\mathcal{B}^1_{ \mathfrak{A}} ( \mathcal{A},X))).
\end{align*}
Therefore $\Phi$ is an isomorphism of seminorms.
\end{proof}

As a corollary, we get  \cite [Proposition 3.2]{ABE}, which asserts that if $ \mathcal{A} $ is module amenable as an $  \mathfrak{A}$-bimodule with  trivial left action and $  \frac{\mathcal{A}}{J} $ is unital, then $  \frac{\mathcal{A}}{J} $ is amenable.

Let $ X, Y $ be Banach spaces. An operator $ \theta : X \rightarrow Y $ is called a quotient map if $ \theta (\hbox{ball} \,X) $ is dense in $ \hbox{ball} \,Y $. In this case we have the isometric isomorphism  $ \dfrac{X}{\ker \theta} \simeq Y$ (see \cite {LT}).

Let $ \lambda $ be a positive real number. A Banach space $X$ is said to be an
$ \mathfrak{L}_{1,\lambda}$-space if every finite dimensional subspace $M$ of $X$ is contained in a
finite dimensional subspace $N$ of $X$, of dimension $k$, such that whose Banach-Mazur distance
from the space $\ell^{k}_{1} $ is at most $  \lambda$.  This means that there exists an isomorphism $ T: N \rightarrow \ell^{k}_{1} $ such that $ \Vert T \Vert \: \Vert T^{-1} \Vert \leq \lambda. $
If $X$ is a $ \mathfrak{L}_{1,\lambda}$-space for some $  \lambda >1$,
then $X$ is said to be an $ \mathfrak{L}_{1 }$-space. Examples of such spaces are $ L^1(\mu) $ for a measure $ \mu $ and $ C(K)^* $, for a compact space $ K $. We use the fact that the projective tensor product with $ \mathfrak{L}_{1 }$-spaces respect subspaces isomorphically, that is, if $ Y $ is a subspace of $ Z $ and $ X $ is an $ \mathfrak{L}_{1 }$-space, then the projective norm on $ Y \otimes X $ is equivalent to the norm induced by the projective norm on $ Z \otimes X $
\cite{Ry}.

In the next lemma, under some conditions, we identify  the Banach spaces $ \mathcal{A}\hat{\otimes}_{\mathfrak{A}} X $ and $ \frac{\mathcal{A}}{J} \hat{\otimes}X $. Note that by the definition of the module projective tensor product we have $\mathcal{A}\hat{\otimes}_{\mathfrak{A}} X \simeq  \frac{\mathcal{A}\hat{\otimes} X}{I_0 }  $,  where $ I_0 $ is the closed linear span of the set of elements of the form $ a\cdot \alpha \otimes x - a \otimes \alpha \cdot x$ with $a \in \mathcal{A}, \alpha \in   \mathfrak{A}, x \in X$.

Recall that $J$ is a closed ideal of $\mathcal A$ and is equal to the closed linear span of  the set of elements of the form $ (a\cdot\alpha)  b - a( \alpha\cdot b )$ with $\alpha \in \mathfrak{A}, a,b \in \mathcal{A}$. Let $J_0$ be the closed linear span of the set of elements of the form $ a\cdot\alpha - f(\alpha)a$ with $a \in  \mathcal{A}, \alpha \in   \mathfrak{A}. $
Since $ \frac{\mathcal{A}}{J} $ is assumed to be unital, $J_0\subseteq J $. In practice, for most interesting examples (see the next section), one has $J=J_0$.

\begin{lem}\label{tensor}
With the above notation, assume that $J=J_0$ and  $ X $ is an $ \mathfrak{L}_{1 }$-space. Then the Banach spaces  $ \frac{\mathcal{A}}{J} \hat{\otimes}X $ and $\frac{\mathcal{A}\hat{\otimes} X}{I_0 } $ are isometrically isomorphic.
\end{lem}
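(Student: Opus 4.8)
The plan is to realise the desired isomorphism as the map induced on a quotient by $q\,\hat{\otimes}\,\mathrm{id}_X$, where $q:\mathcal{A}\to\mathcal{A}/J$ is the quotient homomorphism, with essentially the whole weight of the argument falling on the computation of its kernel. First I would put $I_0$ in a usable form. Because the left action of $\mathfrak{A}$ on $X$ is trivial via the character $f$, for $a\in\mathcal{A},\alpha\in\mathfrak{A},x\in X$ we have $a\cdot\alpha\otimes x-a\otimes\alpha\cdot x=(a\cdot\alpha-f(\alpha)a)\otimes x$, so $I_0$ is the closed linear span in $\mathcal{A}\,\hat{\otimes}\,X$ of the tensors $(a\cdot\alpha-f(\alpha)a)\otimes x$; since $\{a\cdot\alpha-f(\alpha)a:\alpha\in\mathfrak{A},a\in\mathcal{A}\}$ spans a dense subspace of $J_0$, this coincides with the closed linear span of $\{j\otimes x:j\in J_0,\,x\in X\}$, that is, with $\overline{J_0\otimes X}$. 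Invoking the hypothesis $J=J_0$, we get $I_0=\overline{J\otimes X}$, and the statement to be proved becomes $\frac{\mathcal{A}}{J}\,\hat{\otimes}\,X\cong\frac{\mathcal{A}\,\hat{\otimes}\,X}{\overline{J\otimes X}}$ isometrically.

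Next I would introduce $\Theta:=q\,\hat{\otimes}\,\mathrm{id}_X:\mathcal{A}\,\hat{\otimes}\,X\to\frac{\mathcal{A}}{J}\,\hat{\otimes}\,X$, so that $\Theta(a\otimes x)=(a+J)\otimes x$ and $\|\Theta\|\le1$, and check that $\Theta$ is a quotient map in the sense recalled above: given $w\in\frac{\mathcal{A}}{J}\,\hat{\otimes}\,X$ with $\|w\|_\pi<1$, write $w=\sum_n(a_n+J)\otimes x_n$ with $\sum_n\|a_n+J\|\,\|x_n\|<1$, lift each coset to $b_n\in\mathcal{A}$ with $\|b_n\|<\|a_n+J\|+\varepsilon_n$ and $\sum_n\varepsilon_n\|x_n\|$ as small as desired, and observe that $u:=\sum_n b_n\otimes x_n$ satisfies $\Theta(u)=w$ and $\|u\|_\pi<1+\varepsilon$. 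By the isometric isomorphism $\frac{\mathcal{A}\,\hat{\otimes}\,X}{\ker\Theta}\simeq\frac{\mathcal{A}}{J}\,\hat{\otimes}\,X$ attached to a quotient map, it then suffices to prove $\ker\Theta=I_0$.

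The inclusion $I_0\subseteq\ker\Theta$ is immediate: $\Theta\big((a\cdot\alpha-f(\alpha)a)\otimes x\big)=(a\cdot\alpha-f(\alpha)a+J)\otimes x=0$ because $a\cdot\alpha-f(\alpha)a\in J_0=J$, and $\ker\Theta$ is closed. The reverse inclusion $\ker\Theta\subseteq I_0$ is the heart of the matter, and this is where the hypothesis that $X$ is an $\mathfrak{L}_1$-space is used. By the quoted fact from \cite{Ry}, the projective norm on $J\otimes X$ is equivalent to the norm induced on it as a subspace of $\mathcal{A}\,\hat{\otimes}\,X$; hence the canonical map $J\,\hat{\otimes}\,X\to\mathcal{A}\,\hat{\otimes}\,X$ is a topological embedding, its range is complete, hence closed, and it equals $I_0=\overline{J\otimes X}$. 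Combined with the right-exactness of the projective tensor product applied to $0\to J\to\mathcal{A}\to\mathcal{A}/J\to0$, which makes $J\,\hat{\otimes}\,X\to\mathcal{A}\,\hat{\otimes}\,X\xrightarrow{\ \Theta\ }\frac{\mathcal{A}}{J}\,\hat{\otimes}\,X\to0$ exact at the middle term, this yields $\ker\Theta=\overline{\mathrm{im}(J\,\hat{\otimes}\,X\to\mathcal{A}\,\hat{\otimes}\,X)}=I_0$. (Alternatively one can argue by hand: given $u\in\ker\Theta$ and $\varepsilon>0$, truncate a projective representation of $u$ to a finite partial sum $u_N$ with $\|u-u_N\|_\pi<\varepsilon$; then $\Theta(u_N)=-\Theta(u-u_N)$ has projective norm $<\varepsilon$ in $\frac{\mathcal{A}}{J}\,\hat{\otimes}\,X$, and the $\mathfrak{L}_1$-structure of $X$ lets one pull this smallness back through $q$ to correct $u_N$ by an element of $J\otimes X$, so that $\mathrm{dist}(u,I_0)=O(\varepsilon)$.)

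Assembling the pieces, $\Theta$ factors through an isometric isomorphism $\frac{\mathcal{A}\,\hat{\otimes}\,X}{I_0}=\frac{\mathcal{A}\,\hat{\otimes}\,X}{\ker\Theta}\xrightarrow{\ \sim\ }\frac{\mathcal{A}}{J}\,\hat{\otimes}\,X$, which is the assertion of the lemma. I expect the genuine difficulty to be the inclusion $\ker\Theta\subseteq I_0$: the projective tensor product does not in general respect closed subspaces, so one must rule out $\ker\Theta$ being strictly larger than $\overline{J\otimes X}$, and it is precisely for this that the $\mathfrak{L}_1$-hypothesis on $X$ (through \cite{Ry}) is in place; every other step is a routine lifting or bookkeeping argument.
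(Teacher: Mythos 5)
Your proof is correct and follows the same skeleton as the paper's: realise the isomorphism via $\Theta=q\,\hat{\otimes}\,\mathrm{id}_X$, note it is a quotient map, and reduce everything to the identity $\ker\Theta=I_0=\overline{J\otimes X}$. The one place where you genuinely diverge is the inclusion $\ker\Theta\subseteq I_0$. The paper gets exactness of $J\hat{\otimes}X\to\mathcal{A}\hat{\otimes}X\to\frac{\mathcal{A}}{J}\hat{\otimes}X$ at the middle term by dualizing to $0\to\mathfrak{L}(\frac{\mathcal{A}}{J},X^*)\to\mathfrak{L}(\mathcal{A},X^*)\to\mathfrak{L}(J,X^*)$ and then invoking a closed-range criterion to pull exactness back to the predual sequence; you instead cite right-exactness of the projective tensor product, i.e.\ the general fact that $\ker(q\,\hat{\otimes}\,\mathrm{id})$ equals the closed linear span of $J\otimes X$ inside $\mathcal{A}\hat{\otimes}X$. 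That fact is elementary (the bilinear map $(a+J,x)\mapsto a\otimes x+\overline{J\otimes X}$ induces an inverse of norm at most one), so your route is shorter and avoids the duality detour. It also makes visible something the paper's phrasing obscures: the $\mathfrak{L}_1$-hypothesis on $X$ is not needed to prove $\ker\Theta=\overline{J\otimes X}=I_0$, only to identify that closed subspace isomorphically with $J\hat{\otimes}X$ carrying its own projective norm (which is what the paper uses later, e.g.\ in Remark \ref{re}); your parenthetical ``by hand'' argument is likewise dispensable for the lemma as stated. The remaining steps --- rewriting $I_0$ as $\overline{J_0\otimes X}=\overline{J\otimes X}$ via the trivial left action and $J=J_0$, and the explicit lifting showing $\Theta$ is a quotient map (the paper just cites Ryan) --- match the paper's.
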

\begin{proof}
Consider the surjective map $ \varphi :=q \otimes id :\mathcal{A}  \hat{\otimes}X \rightarrow \frac{\mathcal{A}}{J}\hat{\otimes}X$  defined by \\
$\varphi(a\otimes x):=(a+ J )\otimes x $, for $ a \in \mathcal{A}, x \in X $, and  extended by linearity and continuity, where $ q :\mathcal{A} \rightarrow \frac{\mathcal{A}}{J}  $  is the usual quotient map. Since both  $ q $ and $ id $ are quotient maps, so is $ \varphi $  \cite[Proposition 2.5]{Ry} and we have an isometric isomorphism
$$\dfrac{\mathcal{A}  \hat{\otimes}X}{\ker \varphi}\simeq \frac{\mathcal{A}}{J}\hat{\otimes} X.  $$

Let us show that $\ker \varphi =I_0  $. First we show that  $\ker \varphi =J\hat{\otimes} X  $ and then $   J\hat{\otimes} X =I_0$. Consider the short exact sequence   $$\Gamma : \qquad 0 \rightarrow J \hookrightarrow \mathcal{A} \rightarrow \frac{ \mathcal{A}}{J} \rightarrow 0.$$ Since $ X $ is an $ \mathfrak{L}_{1 }$-space,  the closures of $  J\otimes X$ in $ J\hat{\otimes} X $ and $ \mathcal{A}\hat{\otimes} X  $  coincide  \cite [Theorem 2.20]{Ry}.  In particular,
 $ J  \hat{\otimes} X  \subseteq \mathcal{A}\hat{\otimes} X $ is closed. Let us denote this inclusion by $ \iota $. If we show that  the short sequence $$ \Sigma: \qquad J  \hat{\otimes} X  \xrightarrow{\iota} \mathcal{A}\hat{\otimes} X \xrightarrow{\varphi}  \frac{ \mathcal{A}}{J} \hat{\otimes} X \rightarrow 0 $$
 is exact at $ \mathcal{A}\hat{\otimes} X $, we get  $  \ker \varphi = J\hat{\otimes} X. $
Consider the short sequence $$\Sigma^*:\qquad 0 \rightarrow  (\frac{ \mathcal{A}}{J}\hat{\otimes} X)^* \xrightarrow{\varphi^*} ( \mathcal{A} \hat{\otimes} X)^* \xrightarrow{\iota^*} ( J\hat{\otimes} X)^*.$$ This is the same as
$$0 \rightarrow  \mathfrak{L}(\frac{ \mathcal{A}}{J},X^*) \rightarrow  \mathfrak{L}(  \mathcal{A},X^*) \rightarrow \mathfrak{L}(J,X^*),$$  is exact at $ \mathfrak{L}(  \mathcal{A},X^*) $ \cite[Exercise 0.5.4]{H}. Thus
$\Sigma^*$
is exact at  $ ( \mathcal{A} \hat{\otimes} X)^* $. Since
 the image $ J  \hat{\otimes}   X $ of $ \iota $ is closed in $  \mathcal{A} \hat{\otimes}   X $,
 the image of $ \iota^* $ is closed in $ ( J\hat{\otimes} X)^* $. Therefore, $ \Sigma $ is exact at $ \mathcal{A} \hat{\otimes} X $  \cite[Theorem 2.8.31(iii)]{D}, thus $  \ker \varphi = J\hat{\otimes} X. $

 Finally, let us show that $I_0 = J\hat{\otimes} X $. Since the left action of $\mathfrak{A}$ on $X$ is trivial, $ I_0 $ is spanned by the elements of the form $ (a\cdot \alpha - f(\alpha)a )\otimes x $.
Thus $J\otimes X \subseteq I_0$.  Since $ I_0 $ is closed in $\mathcal{A} \hat{\otimes} X$, we have
 $\overline{J\otimes X} \subseteq I_0$, where the closure is taken in $\mathcal{A} \hat{\otimes} X$. Thus $ J \hat{\otimes} X = \overline{J\otimes X} \subseteq I_0 $. The inverse inclusion follows from the assumption that $J_0=J $.
\end{proof}

\begin{rem}\label{re}
It is well-known that the projective tensor product of two $ \mathfrak{L}_{1 }$-space is  an $ \mathfrak{L}_{1 }$-space. Hence if $ \frac{\mathcal{A}}{J} $   is an $ \mathfrak{L}_{1 }$-space, then  by  induction, under the assumptions of  Lemma \ref {tensor}, for every $ k  \in \mathbb{N}$, we conclude that  $$ \underbrace{\mathcal{A}\hat{\otimes}_{\mathfrak{A}} \mathcal{A}\hat{\otimes}_{\mathfrak{A}}\cdots\hat{\otimes}_{\mathfrak{A}}\mathcal{A}}_{k-times}\hat{\otimes}_{\mathfrak{A}}X  \simeq \underbrace{\frac{\mathcal{A}}{J} \hat{\otimes} \frac{\mathcal{A}}{J} \hat{\otimes}\cdots  \hat{\otimes} \frac{\mathcal{A}}{J}}_{k-times} \hat{\otimes}X.  $$
\end{rem}

\begin{rem}\label{act1} 
The space $ Y:=\underbrace{\mathcal{A}\hat{\otimes}_{\mathfrak{A}} \mathcal{A}\hat{\otimes}_{\mathfrak{A}} \cdots\hat{\otimes}_{\mathfrak{A}}\mathcal{A}}_{k-times}\hat{\otimes}_{\mathfrak{A}}X$   is  a Banach    $  \mathfrak{A}$-bimodule with the following actions
$$
(a_1\otimes\cdots\otimes a_k\otimes x)\cdot \alpha := a_1\otimes\cdots\otimes a_k\otimes x\cdot \alpha, \quad
\alpha \cdot  (a_1\otimes\cdots\otimes a_k\otimes x) := \alpha \cdot a_1\otimes\cdots\otimes a_k\otimes x,
$$
and  a Banach $  \mathcal{A}$-bimodule
with the following actions,
 $$(a_1\otimes\cdots\otimes a_k\otimes x)\cdot b := a_1\otimes\cdots\otimes a_k\otimes x\cdot b, $$ and
 \begin{align*}
b \cdot (a_1\otimes\cdots\otimes a_k\otimes x)
&:= ba_1\otimes\cdots\otimes a_k\otimes x \\
&+ \sum^{k-1}_{j=1}b \otimes a_1\otimes\cdots \otimes a_j a_{j+1}\otimes\cdots\otimes a_k \otimes x \\
& +(-1)^{k}   b \otimes a_1 \otimes\cdots \otimes a_k \cdot x, 
\end{align*}
for $  b, a_1,\ldots, a_k \in \mathcal{A}, \alpha \in  \mathfrak{A}, x \in X.$
\end{rem}
Here we extend the result of Theorem \ref{main1} to higher cohomology.
A version of reduction of dimension for module cohomologies is valid (under the standing assumptions of the paragraph before Proposition 2.2) as follows. 
\begin{pro}(Reduction  of dimension)\label{red}
Let $ \frac{\mathcal{A}}{J} $ is  an  $ \mathfrak{L}_{1 }$-space. Under the assumptions of Lemma \ref{tensor}, 
 \begin{enumerate}
\item[(i)]   $  \mathcal{L}_{\mathfrak{A}}^k(\mathcal{A},X^*) \simeq (\underbrace{\mathcal{A}\hat{\otimes}_{\mathfrak{A}} \cdots\hat{\otimes}_{\mathfrak{A}}\mathcal{A}}_{k-times}\hat{\otimes}_{\mathfrak{A}}X     )^* $ is a commutative Banach
 $ \mathcal{A} $-$  \mathfrak{A}$-bimodule, for every  $ k  \in \mathbb{N}$,

\item[(ii)] We have the isomorphism
 $$ \mathcal{H}^{n+k}_{ \mathfrak{A}} ( \mathcal{A},X)\simeq \mathcal{H}^n _{ \mathfrak{A}}(\mathcal{A},\mathcal{L}_{\mathfrak{A}}^k(\mathcal{A},X) ),  $$ of seminormed spaces, for every  $ k,n  \in \mathbb{N}$.
\end{enumerate}

\end{pro}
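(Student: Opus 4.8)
The plan is to prove (i) directly from adjoint associativity of the projective tensor product together with Remark~\ref{act1}, and then to deduce (ii) by transporting everything to the unital Banach algebra $\frac{\mathcal A}{J}$ and invoking the classical reduction of dimension there. Throughout I take the coefficient module, written $X^*$ below, to be a dual Banach $\mathcal A$-$\mathfrak A$-bimodule, as in all the applications, so that every coefficient module which appears is a commutative dual module and the standing hypotheses of this section persist.

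For (i), I would fix $k$ and invoke adjoint associativity, which gives a bijective isometry between the space of bounded $k$-linear maps $\varphi\colon\mathcal A\times\cdots\times\mathcal A\to X^*$ and the dual $(\mathcal A\hat\otimes\cdots\hat\otimes\mathcal A\hat\otimes X)^*$, sending $\varphi$ to the functional $\widetilde\varphi$ determined by $\widetilde\varphi(a_1\otimes\cdots\otimes a_k\otimes x)=\langle\varphi(a_1,\dots,a_k),x\rangle$. One checks that $\varphi$ is an $\mathfrak A$-module map if and only if $\widetilde\varphi$ annihilates the closed span of the elements $a_1\otimes\cdots\otimes(a_i\cdot\alpha)\otimes a_{i+1}\otimes\cdots\otimes x-a_1\otimes\cdots\otimes a_i\otimes(\alpha\cdot a_{i+1})\otimes\cdots\otimes x$ $(1\le i<k)$ and $a_1\otimes\cdots\otimes(a_k\cdot\alpha)\otimes x-a_1\otimes\cdots\otimes a_k\otimes(\alpha\cdot x)$, that is, if and only if $\widetilde\varphi\in Y^*$, where $Y:=\mathcal A\hat\otimes_{\mathfrak A}\cdots\hat\otimes_{\mathfrak A}\mathcal A\hat\otimes_{\mathfrak A}X$ with $k$ copies of $\mathcal A$. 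This yields the claimed (isometric) identification $\mathcal L^k_{\mathfrak A}(\mathcal A,X^*)\simeq Y^*$. Transporting the $\mathcal A$-$\mathfrak A$-bimodule structure of $Y$ described in Remark~\ref{act1} across this identification makes $\mathcal L^k_{\mathfrak A}(\mathcal A,X^*)$ a Banach $\mathcal A$-$\mathfrak A$-bimodule; it is commutative since the left action of $\mathfrak A$ on $\mathcal A$ is trivial via $f$ and $X^*$ is commutative, so both $\mathfrak A$-actions on $Y$ collapse to multiplication by $f(\alpha)$, giving $\alpha\cdot\Lambda=f(\alpha)\Lambda=\Lambda\cdot\alpha$ on $Y^*$.

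For (ii), the first step is the auxiliary claim $(\star)$: for every commutative Banach $\mathcal A$-$\mathfrak A$-bimodule $V$ which is an $\mathfrak L_1$-space with left $\mathfrak A$-action trivial via $f$, and every $m\in\mathbb N$, one has $\mathcal H^m_{\mathfrak A}(\mathcal A,V^*)\simeq\mathcal H^m(\frac{\mathcal A}{J},V^*)$ as seminormed spaces. To prove $(\star)$, I would combine part~(i) (with $k$ replaced by $m$), the iterated form of Lemma~\ref{tensor} recorded in Remark~\ref{re}, and adjoint associativity, to obtain dimensionwise isomorphisms $\mathcal L^m_{\mathfrak A}(\mathcal A,V^*)\simeq((\frac{\mathcal A}{J})^{\hat\otimes m}\hat\otimes V)^*\simeq\mathcal L^m(\frac{\mathcal A}{J},V^*)$, and then check that these intertwine the module coboundary on $(\mathcal L^m_{\mathfrak A}(\mathcal A,V^*))_m$ with the Hochschild coboundary on $(\mathcal L^m(\frac{\mathcal A}{J},V^*))_m$: the ``outer'' coboundary terms match because, by Remark~\ref{r1}, the $\mathcal A$-action on $V^*$ already factors through $\frac{\mathcal A}{J}$, and the ``inner'' ones match because $J$ is an ideal, so $a_ia_{i+1}+J=(a_i+J)(a_{i+1}+J)$. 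Passing to cohomology gives $(\star)$. Both $V=X$ (by the standing hypotheses) and $V=Y$ satisfy the hypotheses of $(\star)$ — for $Y$ because by Remark~\ref{re} it is isomorphic to $(\frac{\mathcal A}{J})^{\hat\otimes k}\hat\otimes X$, a projective tensor product of $\mathfrak L_1$-spaces, whose two $\mathfrak A$-actions are multiplication by $f(\alpha)$ as observed in (i).

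Finally, applying $(\star)$ with $V=X$, then the classical reduction of dimension for the unital Banach algebra $\frac{\mathcal A}{J}$ (see, e.g., \cite{H}), and then $(\star)$ with $V=Y$ together with the $\frac{\mathcal A}{J}$-bimodule identification $\mathcal L^k_{\mathfrak A}(\mathcal A,X^*)=Y^*\simeq\mathcal L^k(\frac{\mathcal A}{J},X^*)$ supplied by part~(i) and Remark~\ref{re}, we obtain
\[
\mathcal H^{n+k}_{\mathfrak A}(\mathcal A,X^*)\;\simeq\;\mathcal H^{n+k}\bigl(\tfrac{\mathcal A}{J},X^*\bigr)\;\simeq\;\mathcal H^{n}\bigl(\tfrac{\mathcal A}{J},\mathcal L^k(\tfrac{\mathcal A}{J},X^*)\bigr)\;\simeq\;\mathcal H^{n}_{\mathfrak A}\bigl(\mathcal A,\mathcal L^k_{\mathfrak A}(\mathcal A,X^*)\bigr),
\]
which is (ii). I expect the main obstacle to be the verification, inside $(\star)$, that the dimensionwise identifications assemble into an isomorphism of cochain complexes rather than merely of the separate cochain spaces: this forces one to carry the $\mathcal A$- and $\mathfrak A$-module structures of Remark~\ref{act1} through the adjoint-associativity and module-tensor identifications and to use $J\cdot V^*=V^*\cdot J=0$ at each step (Remark~\ref{r1}). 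A secondary point is norm bookkeeping: because the $\mathfrak L_1$-comparisons behind Lemma~\ref{tensor} and Remark~\ref{re} are norm-equivalences, not isometries, every $\simeq$ above is an isomorphism, not an isometry, of seminormed spaces — precisely as asserted.
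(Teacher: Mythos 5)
Your proof of (i) is essentially the paper's: both rest on the adjoint--associativity identification $\mathcal L^k(\mathcal A,X^*)\simeq(\mathcal A^{\hat\otimes k}\hat\otimes X)^*$ restricted to module maps, together with the module structure of Remark~\ref{act1}; the paper additionally writes out the compatibility check $(a\cdot\alpha)\cdot y-a\cdot(\alpha\cdot y)\in I_0$ that you leave implicit when you ``transport'' the structure, so you should record that verification. For (ii) you take a genuinely different route. The paper proves (ii) intrinsically, by rerunning the classical reduction-of-dimension argument (Runde, Theorem~2.4.6) with module cochains in place of ordinary cochains, and only afterwards deduces the all-degrees comparison with $\mathcal H^k(\frac{\mathcal A}{J},X^*)$ (Corollary~\ref{cor1}). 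You reverse the logical order: your auxiliary claim $(\star)$ is, in substance, Corollary~\ref{cor1}, proved directly as an isomorphism of cochain complexes via Lemma~\ref{tensor}, Remark~\ref{re} and Remark~\ref{r1}, after which you import the classical reduction of dimension for the unital algebra $\frac{\mathcal A}{J}$ and transport it back. This is not circular, it yields Corollary~\ref{cor1} as a byproduct, and it spares you from re-deriving Runde's argument in the module category; the costs are (a) that your version of (ii) is established only for dual coefficient modules $V^*$ with $V$ an $\mathfrak L_1$-space satisfying the standing hypotheses --- which is exactly how the paper actually applies (ii), but is narrower than what the direct module-cochain argument gives --- and (b) that the real work is displaced into the verification, which you rightly flag as the crux, that the dimensionwise identifications intertwine the module coboundary on $\mathcal A$ with the Hochschild coboundary on $\frac{\mathcal A}{J}$ (using $J\cdot V^*=V^*\cdot J=0$ and $a_ia_{i+1}+J=(a_i+J)(a_{i+1}+J)$).
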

\begin{proof}
(i) 
We show  that the module actions  defined in Remark \ref{act1} are compatible.
Let us check the case $ k=1 $, the cases $ k>1 $ follow by an easy induction.
Take $ y=b\otimes x \in  \mathcal{A}\hat{\otimes}X, a \in \mathcal{A}$ and $ \alpha \in  \mathfrak{A}$. Let us observe that $ (a \cdot \alpha )\cdot y +I_0 = a \cdot (\alpha \cdot y )+I_0,$ which is evident, since 
\begin{align*}
(a \cdot \alpha )\cdot y - a \cdot (\alpha \cdot y )
&= (a \cdot \alpha)\cdot (b\otimes x)-a \cdot (\alpha \cdot b\otimes x) \\
&= (a \cdot \alpha) b\otimes x- (a \cdot \alpha) \otimes b \cdot x-a  (\alpha \cdot b)\otimes x+ a\otimes (\alpha \cdot b)\cdot x \\
&=[ (a \cdot \alpha) b\otimes x- a  (\alpha \cdot b)\otimes x]  \\
&+[  (a \cdot \alpha) \otimes b \cdot x- a\otimes (\alpha \cdot b)\cdot x ],
\end{align*}
and each bracket clearly belongs to $ I_0 $. Other compatibility conditions are straightforward.

In particular, $ Y= \underbrace{\mathcal{A}\hat{\otimes}_{\mathfrak{A}} \mathcal{A}\hat{\otimes}_{\mathfrak{A}} \cdots\hat{\otimes}_{\mathfrak{A}}\mathcal{A}}_{k-times}\hat{\otimes}_{\mathfrak{A}}X $ is a commutative Banach \\
$ \mathcal{A} $-$  \mathfrak{A}$-bimodule,
and so is $Y^*$. The  isometric isomorphism in (i) now follows from  \cite[Exercise 5.3.1]{R}.  Finally, replacing cochains with module cochains, an argument almost idential to that of the proof of Theorem \cite[Theorem 2.4.6]{R} shows (ii). 
\end{proof}

\begin{cor}\label{cor1} Under the above assumptions, the seminormed spaces
 $ \mathcal{H}^{k}_{ \mathfrak{A}} ( \mathcal{A},X^*)$ and $\mathcal{H}^k ( \frac{\mathcal{A}}{J}, X^*) $ are isometrically isomorphic.
\end{cor}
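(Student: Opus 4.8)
The plan is to peel off one cohomological degree at a time, reducing everything to the degree-one statement already established in Theorem~\ref{main1}. For $k=1$ there is nothing to prove beyond checking that the standing hypotheses pass to the dual module: if $X$ is a commutative Banach $\mathcal{A}$-$\mathfrak{A}$-bimodule with left $\mathfrak{A}$-action trivial via the character $f$, then so is $X^{*}$, because $\langle\alpha\cdot\phi,x\rangle=\langle\phi,\alpha\cdot x\rangle=f(\alpha)\langle\phi,x\rangle$ for $\alpha\in\mathfrak{A}$, $\phi\in X^{*}$ and $x\in X$; hence Theorem~\ref{main1}, applied with coefficient module $X^{*}$, yields the case $k=1$. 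From now on assume $k\ge 2$.

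First I would invoke the module reduction of dimension, Proposition~\ref{red}(ii), with $X^{*}$ as coefficient and with $n=1$ and $k-1$ in place of $k$, to obtain $\mathcal{H}^{k}_{\mathfrak{A}}(\mathcal{A},X^{*})\simeq\mathcal{H}^{1}_{\mathfrak{A}}(\mathcal{A},Z)$, where $Z:=\mathcal{L}^{k-1}_{\mathfrak{A}}(\mathcal{A},X^{*})$. By Proposition~\ref{red}(i), $Z$ is a commutative Banach $\mathcal{A}$-$\mathfrak{A}$-bimodule and $Z\simeq(\mathcal{A}\hat{\otimes}_{\mathfrak{A}}\cdots\hat{\otimes}_{\mathfrak{A}}\mathcal{A}\hat{\otimes}_{\mathfrak{A}}X)^{*}$, with $k-1$ copies of $\mathcal{A}$. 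Inspecting the actions in Remark~\ref{act1}, the left action of $\mathfrak{A}$ on $\mathcal{A}\hat{\otimes}_{\mathfrak{A}}\cdots\hat{\otimes}_{\mathfrak{A}}X$ affects only the first tensor leg, on which $\mathfrak{A}$ acts through $f$, so it is trivial via $f$; passing to the dual, the canonical left $\mathfrak{A}$-action on $Z$ is trivial via $f$ as well. Thus $Z$ satisfies all the standing assumptions of the paragraph before Proposition~\ref{main}, so Theorem~\ref{main1} applies with $Z$ in the role of the coefficient module and gives $\mathcal{H}^{1}_{\mathfrak{A}}(\mathcal{A},Z)\simeq\mathcal{H}^{1}(\frac{\mathcal{A}}{J},Z)$.

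Next I would identify $Z$, as a $\frac{\mathcal{A}}{J}$-bimodule, with a classical hom-module. By Remark~\ref{re}, the iterated map $q\otimes\cdots\otimes q\otimes\mathrm{id}$ (where $q\colon\mathcal{A}\to\frac{\mathcal{A}}{J}$ is the quotient map) is an isometric isomorphism of $\mathcal{A}\hat{\otimes}_{\mathfrak{A}}\cdots\hat{\otimes}_{\mathfrak{A}}\mathcal{A}\hat{\otimes}_{\mathfrak{A}}X$ onto $\frac{\mathcal{A}}{J}\hat{\otimes}\cdots\hat{\otimes}\frac{\mathcal{A}}{J}\hat{\otimes}X$; since $q$ is an algebra homomorphism and $J$ annihilates $X$ (Remark~\ref{r1}), the explicit formulas of Remark~\ref{act1} show that this map intertwines the $\mathcal{A}$-bimodule actions, hence it is an isometric $\frac{\mathcal{A}}{J}$-bimodule isomorphism. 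Dualizing, and using the non-module analogue $\mathcal{L}^{k-1}(\frac{\mathcal{A}}{J},X^{*})\simeq(\frac{\mathcal{A}}{J}\hat{\otimes}\cdots\hat{\otimes}\frac{\mathcal{A}}{J}\hat{\otimes}X)^{*}$ of the identification used in Proposition~\ref{red}(i) (cf.\ \cite[Exercise 5.3.1]{R}), we get an isometric $\frac{\mathcal{A}}{J}$-bimodule isomorphism $Z\simeq\mathcal{L}^{k-1}(\frac{\mathcal{A}}{J},X^{*})$, whence $\mathcal{H}^{1}(\frac{\mathcal{A}}{J},Z)\simeq\mathcal{H}^{1}(\frac{\mathcal{A}}{J},\mathcal{L}^{k-1}(\frac{\mathcal{A}}{J},X^{*}))$. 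Finally, since $\frac{\mathcal{A}}{J}$ is unital, the classical reduction of dimension \cite[Theorem 2.4.6]{R} gives $\mathcal{H}^{1}(\frac{\mathcal{A}}{J},\mathcal{L}^{k-1}(\frac{\mathcal{A}}{J},X^{*}))\simeq\mathcal{H}^{k}(\frac{\mathcal{A}}{J},X^{*})$, and composing the four isomorphisms produces the asserted (isometric) isomorphism.

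The step I expect to require genuine care is the $\frac{\mathcal{A}}{J}$-bimodule verification in the third paragraph: one must check that the isometric isomorphism of Remark~\ref{re} respects the whole $\mathcal{A}$-bimodule structure of Remark~\ref{act1}, including the telescoping left action with its sign $(-1)^{k}$ on the last leg and the passage modulo the ideals $I_{0}$ and $J$, and that taking duals together with the hom-tensor adjunction preserves both the bimodule structures and the norms, so that the induced maps on $\mathcal{H}^{1}$ really are isometric isomorphisms of seminormed spaces (in particular, one should confirm that the reductions of dimension used are isometric at the level of cochain complexes). A lesser point is to justify that Proposition~\ref{red} and Theorem~\ref{main1} apply with $X$ replaced by the dual modules $X^{*}$ and $Z$; this amounts to the observations above that these are commutative $\mathcal{A}$-$\mathfrak{A}$-bimodules with left-trivial $\mathfrak{A}$-action via $f$, the $\mathfrak{L}_{1}$-conditions continuing to be imposed on $X$ and on $\frac{\mathcal{A}}{J}$.
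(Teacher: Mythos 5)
Your proposal is correct and follows essentially the same chain of isomorphisms as the paper: reduction of dimension (Proposition \ref{red}) to pass to $\mathcal{H}^1_{\mathfrak{A}}(\mathcal{A},\mathcal{L}^{k-1}_{\mathfrak{A}}(\mathcal{A},X^*))$, then Theorem \ref{main1}, then the identification of $\mathcal{L}^{k-1}_{\mathfrak{A}}(\mathcal{A},X^*)$ with $\mathcal{L}^{k-1}(\frac{\mathcal{A}}{J},X^*)$ via Remark \ref{re}, and finally the classical reduction of dimension. Your additional checks (that $X^*$ and $Z$ inherit the standing hypotheses, and that the tensor identification respects the bimodule structures) are points the paper glosses over, but they do not change the route.
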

\begin{proof} We have,
\begin{align*}
\mathcal{H}^{k}_{ \mathfrak{A}} ( \mathcal{A},X^*)
&\simeq  \mathcal{H}^1_{\mathfrak{A}} ( \mathcal{A},\mathcal{L}^{k-1}_{\mathfrak{A}}(\mathcal{A},X^*)) \quad \quad\quad\quad\quad\quad\quad\:\:\text{by Proposition \ref {red}} \\
&\simeq  \mathcal{H}^1 ( \frac{\mathcal{A}}{J},\mathcal{L}^{k-1}_{\mathfrak{A}}(\mathcal{A},X^*)) \:\quad \quad \quad \: \quad\quad\quad \quad\text{by Theorem \ref {main1}} \\
& \simeq  \mathcal{H}^1 ( \frac{\mathcal{A}}{J},(\underbrace{\mathcal{A}\hat{\otimes}_{\mathfrak{A}} \mathcal{A}\hat{\otimes}_{\mathfrak{A}}\cdots\hat{\otimes}_{\mathfrak{A}}\mathcal{A}}_{(k-1)-times}\hat{\otimes}_{\mathfrak{A}}X )^*) \\
&\simeq  \mathcal{H}^1 ( \frac{\mathcal{A}}{J},(\underbrace{\frac{\mathcal{A}}{J} \hat{\otimes} \frac{\mathcal{A}}{J} \hat{\otimes}\cdots\hat{\otimes}  \frac{\mathcal{A}}{J}}_{(k-1)-times}\hat{\otimes} X)^*) \qquad \quad\text{by Remark \ref {re}} \\
&\simeq  \mathcal{H}^1 ( \frac{\mathcal{A}}{J},\mathcal{L}^{k-1}(\frac{\mathcal{A}}{J},X^*))  \\
&\simeq \mathcal{H}^k ( \frac{\mathcal{A}}{J},X^*).
\end{align*}\end{proof}
\section{Applications to semigroup algebras}
\label{sec:2}
A discrete semigroup $ S $ is called an inverse semigroup if for every $ s\in S $, there exists a unique element $s^* \in S $ such that $ ss^*s=s $ and $ s^*ss^*=s^* $. An element
$ e\in S $ is called an idempotent if $ e=e^2=e^* $. The set of idempotents of $ S $ is denoted by
$ E $, which is a commutative subsemigroup of $ S $. In particular, $ \ell^1(E) $ could be regarded as a subalgebra of $\ell^1(S)  $, and thereby $\ell^1(S)  $ is a Banach $ \ell^1(E) $-bimodule with a compatible canonical action (by multiplication). 

There is a natural order on $ E $ defined by
$$ e_1 \leq e_2 \Longleftrightarrow e_1e_2=e_2e_1=e_1, \quad (e_1,e_2 \in E). $$
An inverse semigroup $ S $ is called a Clifford semigroup, if for every $ s\in S $,
$ ss^*=s^*s $, or equivalently, an inverse semigroup with each idempotent central. In this case $ E $ becomes a semilattice, that is, a commutative semigroup with each element an idempotent.

Let $ S $ be an inverse semigroup with the set of idempotents $ E $. 
 Let
$\ell^1(E)  $ act on $\ell^1(S)  $ by multiplication from right and  trivially  from left (induced by the augmentation character $ f $ on $ \ell^1(E)$), that is,
\begin{eqnarray}\label{act}
\delta_e\cdot \delta_s=\delta_s, \quad \delta_s\cdot \delta _e=\delta_{se}=\delta_s \ast \delta_e \quad (e \in E, s\in S),
\end{eqnarray}
where $ \delta _s $ is the point mass at $ s $. Here the closed ideal $ J $ (see section 1) is the closed linear span  of the set
$$\lbrace   \delta_{set}- \delta_{st}: s,t \in S, e\in E \rbrace.  $$

We consider an equivalence relation on $ S $ defined by
\begin{eqnarray}\label{equ}
s \sim t \Longleftrightarrow \delta_s - \delta_t \in J \quad (s,t \in S).
\end{eqnarray}
Since $ E $ is a semilattice, the discussion before \cite[Theorem 2.4]{ABE} shows that $ S /\sim $ is a discrete group. In this case, by the proof of \cite[Theorem 3.3]{RASE}, we observe that
$ \frac{\ell^1(S)}{J} \simeq \ell^1(S /\sim )$ as (commutative) $\ell^1(E)  $-bimodules. The discrete group $ S /\sim$ is the same as the maximal group homomorphic image $ G_S $ of $ S $.

E. Nasrabadi and the second author in \cite {NP} showed that, for a commutative inverse semigroup $ S $ with the set of idempotents $ E $, $ \mathcal{H}^1_{ \ell^1(E)} ( \ell^1(S),\ell^1(S)^{(2n+1)}) $ is trivial, for each $ n \in \mathbb{N} $. In general, when $ S $ is not commutative and $\ell^1(E)  $ acts on $\ell^1(S)  $ by multiplication from right and trivially from left, $\ell^1(S)  $ is not necessarily commutative as a Banach $ \ell^1(E) $-bimodule. However, if one considers $ \frac{\ell^1(S)}{J} $  as a Banach $ \ell^1(E) $-bimodule, under the above actions, $ \frac{\ell^1(S)}{J} $ is a commutative $\ell^1(S)$-$ \ell^1(E) $-bimodule. We show that $ \mathcal{H}^1_{ \ell^1(E)} ( \ell^1(S),(\frac{\ell^1(S)}{J})^{(2n+1)})  $
is trivial, for each $ n \in \mathbb{N} $.

Let us recall the following known result \cite[Theorems 2.2, 3.3]{P}.
\begin{pro} \label{dr}
Let $ G $ be a locally compact group. Then $ \mathcal{H}^2 ( L^1(G),L^1(G)^{(2n+1)})  $ is a Banach space, for each $ n \in \mathbb{N} \cup \lbrace 0 \rbrace$. Moreover, when 
$ G $ is discrete, \\
 $ \mathcal{H}^1 ( \ell^1(G),\ell^1(S)^{(2n+1)}) =0 $, for every $ G $-set $ S $ and for each $ n \in \mathbb{N}\cup \lbrace 0 \rbrace $.
\end{pro}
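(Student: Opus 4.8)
The statement is recalled from \cite{P}, so in the paper it is cited rather than proved; still, here is the argument I would give.

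\emph{Vanishing of $\mathcal{H}^1$.} The plan is to reduce to a transitive $G$-set and then invoke permanent weak amenability of the group algebra. Decomposing the $G$-set into orbits, $S=\bigsqcup_\iota S_\iota$, the module $\ell^1(S)$ becomes the $\ell^1$-direct sum of the $\ell^1(S_\iota)$ over $\ell^1(G)$; taking an odd number of duals turns this into an $\ell^\infty$-product, so that a bounded derivation $D\colon\ell^1(G)\to\ell^1(S)^{(2n+1)}$ is precisely a uniformly bounded family of bounded derivations into the summands $\ell^1(S_\iota)^{(2n+1)}$, and $D$ is inner if and only if every component is. Hence it suffices to treat a transitive $G$-set $S_\iota\cong G/H$ ($H$ a point stabiliser). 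Under the regular action $\ell^1(G/H)$ is simultaneously a quotient module and, after averaging over $H$, a complemented submodule of $\ell^1(G)$, so a Shapiro-type comparison reduces bounded derivations from $\ell^1(G)$ into $\ell^1(G/H)^{(2n+1)}$ to those into $\ell^1(G)^{(2n+1)}$; these last are all inner, for every locally compact group $G$, by the theorem of Dales, Ghahramani and Gr{\o}nb{\ae}k on derivations into iterated duals ($\mathcal{H}^1(\ell^1(G),\ell^1(G)^{(2n+1)})=0$; the case $n=0$ is Johnson's weak amenability of group algebras). This gives $\mathcal{H}^1(\ell^1(G),\ell^1(S)^{(2n+1)})=0$.

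\emph{$\mathcal{H}^2$ is a Banach space.} Here I would apply the classical, non-module version of the reduction-of-dimension principle used in Proposition \ref{red}: since $L^1(G)^{(2n+1)}=X^*$ with $X=L^1(G)^{(2n)}$, one has $\mathcal{H}^2(L^1(G),L^1(G)^{(2n+1)})\cong\mathcal{H}^1(L^1(G),(L^1(G)\hat{\otimes}X)^*)$, with the new coefficient module again a dual module $Y^*$. It then suffices to prove that the $1$-coboundaries into $Y^*$ are norm closed in the $1$-cocycles, equivalently that the map $\psi\mapsto\operatorname{ad}_\psi$ from $Y^*$ into $\mathcal{Z}^1(L^1(G),Y^*)$ is bounded below modulo its kernel, the invariant subspace $(Y^*)^{L^1(G)}$. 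I would extract such an estimate from a bounded approximate identity $(e_\lambda)$ of $L^1(G)$: given $D=\operatorname{ad}_\psi$, any weak-$*$ cluster point of $(\psi\cdot e_\lambda)$ implements the same derivation and has norm controlled by $\|D\|$ and the bound of $(e_\lambda)$, so that the quotient seminorm on $\mathcal{H}^1$ is a norm and the space is complete.

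\emph{Main obstacle.} The delicate point is the bounded-below estimate in the $\mathcal{H}^2$ step: the space of $1$-cochains into the large dual module $Y^*$ offers no a priori uniform link between a coboundary and a cochain producing it, and the only route I see is to pass to a weak-$*$ limit in $Y^*$ and then verify that the limiting functional is still a genuine module element implementing $D$; this module-action bookkeeping is where the real work lies and is exactly what \cite{P} carries out. A secondary subtlety is the Shapiro-type step: one must check that taking an odd number of duals preserves the quotient-module/complemented-submodule relationship between $\ell^1(G/H)$ and $\ell^1(G)$, which rests on $\ell^1$- (and $C(K)^*$-type) spaces being $\mathfrak{L}_1$-spaces, so that the relevant functors respect subspaces and quotients, in the spirit of Lemma \ref{tensor}.
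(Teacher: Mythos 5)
You are right that the paper itself does not prove Proposition \ref{dr}: it is quoted from \cite[Theorems 2.2, 3.3]{P}, so there is no internal argument to compare with. Judged on its own terms, your sketch has genuine gaps at both of the points you flag as delicate, and at one point you treat as routine.

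For the $\mathcal{H}^1$ statement, the orbit decomposition does not survive iterated duals. One has $(\bigoplus_{\ell^1}\ell^1(S_\iota))^{*}=\prod_{\ell^\infty}\ell^\infty(S_\iota)$, but the dual of an $\ell^\infty$-product over an infinite index set is strictly larger than the $\ell^1$-sum of the duals (it contains functionals supported ``at infinity''), so for $n\ge 1$ the module $\ell^1(S)^{(2n+1)}$ is \emph{not} the $\ell^\infty$-product of the $\ell^1(S_\iota)^{(2n+1)}$, and a bounded derivation into it is not a uniformly bounded family of componentwise derivations. The Shapiro-type step is also unjustified: for an infinite subgroup $H$ there is no bounded averaging operator over $H$ taking values in $\ell^1(G)$, so $\ell^1(G/H)$ is a quotient module of $\ell^1(G)$ but in general not a complemented submodule; knowing only that it is a quotient does not let you transfer innerness of derivations from $\ell^1(G)^{(2n+1)}$ down to $\ell^1(G/H)^{(2n+1)}$ (and the bimodule structure on $\ell^1(G/H)$ induced by a one-sided $G$-action is not the one for which the Dales--Ghahramani--Gr{\o}nb{\ae}k theorem is stated, so even the base case of your reduction does not match the cited vanishing result). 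These two reductions are exactly where the content of \cite{P} lies, and neither can be waved through.

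For the $\mathcal{H}^2$ statement, the reduction of dimension is fine, but the proposed norm estimate collapses. If $D=\operatorname{ad}_\psi$, the only a priori bound on $\psi\cdot e_\lambda$ is $\|\psi\|\sup_\lambda\|e_\lambda\|$; the derivation identity controls $\|e_\lambda\cdot\psi-\psi\cdot e_\lambda\|=\|D(e_\lambda)\|$, not $\|\psi\cdot e_\lambda\|$. Worse, when the predual $Y$ is essential as a left module one has $\psi\cdot e_\lambda\to\psi$ in the weak-$*$ topology, so the weak-$*$ cluster point you propose is just $\psi$ again, with its original, uncontrolled norm. What is actually needed is an implementing element of norm $O(\|D\|)$, and producing it requires the concrete realization of $(L^1(G)\hat{\otimes}L^1(G)^{(2n)})^*$ as a function module over a $G$-set together with an averaging argument over $G$ itself; it cannot be extracted from a bounded approximate identity alone. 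So both halves of your sketch would need to be replaced by the arguments of \cite{P} rather than merely completed.
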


\begin{thm}
Let $ S $ be an inverse semigroup with the set of idempotents $ E $. Let $\ell^1(E)  $ act on $\ell^1(S)  $ by multiplication from right and trivially from left. Then,
$$\mathcal{H}^1_{\ell^1(E)} (\ell^1(S), \ell^1(G_S)^{(2n+1)})=0,$$ for each $ n \in \mathbb{N}\cup \lbrace 0 \rbrace $.
\end{thm}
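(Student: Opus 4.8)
The plan is to deduce this theorem from the machinery built up in Section 2, in particular Corollary \ref{cor1}, together with the structure theory of inverse semigroups recalled at the start of Section 3. First I would verify that the standing hypotheses of Section 2 are met in this situation: $\mathcal{A}=\ell^1(S)$ is a Banach $\mathfrak{A}$-bimodule for $\mathfrak{A}=\ell^1(E)$, with the left action of $\ell^1(E)$ trivial via the augmentation character $f$ and the right action given by multiplication, as in \eqref{act}; the coefficient module $X=\ell^1(G_S)$ (and hence all its odd duals $X^{(2n+1)}=(\ell^1(G_S)^{(2n)})^*$, which are duals of the $\mathfrak L_1$-space $\ell^1(G_S)^{(2n)}$) is a commutative Banach $\mathcal{A}$-$\mathfrak{A}$-bimodule with left $\mathfrak{A}$-action trivial via the same $f$; and $\mathcal{A}/J=\ell^1(S)/J\simeq\ell^1(G_S)$ is unital precisely because $G_S=S/\!\sim$ is a discrete \emph{group}, so its identity $\delta_{eG_S}$ gives the unit. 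I would also note $J=J_0$ here — this is essentially the content of \cite[Lemma 3.1]{ABE} / \cite[Theorem 2.4]{ABE} in the semilattice case, since $\delta_s\cdot\delta_e-f(\delta_e)\delta_s=\delta_{se}-\delta_s$ already generates $J$ — so Lemma \ref{tensor} and the Reduction of Dimension Proposition \ref{red} apply, and $\mathcal{A}/J\simeq\ell^1(G_S)$ is an $\mathfrak L_1$-space.

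With these checks in place, the argument is short. By Corollary \ref{cor1}, applied with $X$ replaced by $\ell^1(G_S)^{(2n)}$ (so that $X^*=\ell^1(G_S)^{(2n+1)}$), we obtain an isometric isomorphism of seminormed spaces
$$\mathcal{H}^1_{\ell^1(E)}\bigl(\ell^1(S),\ell^1(G_S)^{(2n+1)}\bigr)\ \simeq\ \mathcal{H}^1\Bigl(\tfrac{\ell^1(S)}{J},\ell^1(G_S)^{(2n+1)}\Bigr)\ \simeq\ \mathcal{H}^1\bigl(\ell^1(G_S),\ell^1(G_S)^{(2n+1)}\bigr),$$
where in the last step I use $\ell^1(S)/J\simeq\ell^1(G_S)$ as Banach algebras (and as $\ell^1(E)$-bimodules, so the coefficient action is transported correctly). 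Finally I invoke Proposition \ref{dr}: since $G_S$ is a discrete group and $G_S$ itself is a $G_S$-set, the classical cohomology $\mathcal{H}^1(\ell^1(G_S),\ell^1(G_S)^{(2n+1)})$ vanishes for every $n\in\mathbb{N}\cup\{0\}$. Stringing the isomorphisms together gives $\mathcal{H}^1_{\ell^1(E)}(\ell^1(S),\ell^1(G_S)^{(2n+1)})=0$, as claimed. The case $n=0$ is included because Proposition \ref{dr} covers it.

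The main obstacle, and the step deserving the most care, is checking that Corollary \ref{cor1} genuinely applies — i.e.\ that all the standing assumptions accumulated through Section 2 hold for this concrete $\mathcal{A},\mathfrak{A},X$. Two points need attention: first, that the left $\mathfrak{A}$-action on the coefficient module $\ell^1(G_S)^{(2n+1)}$ is trivial via the \emph{same} character $f$ as on $\ell^1(S)$ — this follows because $\ell^1(G_S)^{(2n)}$ inherits its $\ell^1(E)$-module structure from $\ell^1(S)/J$, on which $\ell^1(E)$ acts through $f$ by Remark \ref{r1} (every element of $J_0$, hence of $J$, is killed on the left), and dualizing an even number of times preserves this; and second, that $\ell^1(G_S)$ is genuinely a commutative $\ell^1(S)$-$\ell^1(E)$-bimodule, which is exactly the observation made in the paragraph before Proposition \ref{dr}. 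Once these are nailed down the rest is a direct citation chain, so the proof itself will be only a few lines: state that the hypotheses hold, apply Corollary \ref{cor1}, identify $\ell^1(S)/J$ with $\ell^1(G_S)$, and quote Proposition \ref{dr}.
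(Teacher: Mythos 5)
Your proposal is correct and follows essentially the same route as the paper: reduce the module cohomology to the ordinary Hochschild cohomology of $\ell^1(S)/J\simeq\ell^1(G_S)$ and then quote the vanishing result (Proposition~\ref{dr}) for group algebras of discrete groups. The only cosmetic difference is that you invoke Corollary~\ref{cor1} with $k=1$ (and hence verify the $\mathfrak{L}_1$ and $J=J_0$ hypotheses), whereas the paper applies Theorem~\ref{main1} directly to the coefficient module $\ell^1(G_S)^{(2n+1)}$, which needs none of those extra assumptions.
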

\begin{proof}
 For every $s \in S  $, letting $ e =s^*s$, we have  $ \delta_s -\delta_{se} \in J $, thus $ \ell^1(G_S) =\frac{\ell^1(S)}{J} $ is a commutative Banach  $\ell^1(S)$-$\ell^1(E) $-bimodule and by Proposition \ref {dr} we have
\begin{align*}
\mathcal{H}^1_{\ell^1(E)} (\ell^1(S),\ell^1(G_S)^{(2n+1)})
&\simeq \mathcal{H}^1 (\frac{\ell^1(S)}{J},(\frac{\ell^1(S)}{J})^{(2n+1)}) \quad \qquad \text{(by Theorem \ref {main1})} \\
&\simeq\mathcal{H}^1 (\ell^1(S /\sim),(\ell^1(S /\sim))^{(2n+1)})    \\
&\simeq\mathcal{H}^1 (\ell^1(G_S),(\ell^1(G_S))^{(2n+1)})=0.
\end{align*}\end{proof}

This provides an affirmative answer to  the module derivation problem, asking if 
every module derivation $ D: \ell^1(S) \rightarrow \frac{\ell^1(S)}{J}$ is inner (compare with  \cite[Corollary 3.4]{EA}).

Let $  \mathcal{C}$ be a bicyclic inverse semigroup generated by $  p$
and $  q$, that is,
$$ \mathcal{C}=\lbrace p^mq^n : m, n\geq 0    \rbrace, \quad (p^mq^n)^*=p^nq^m.$$
The multiplication in $  \mathcal{C}$ is defined by
$$(p^mq^n)(p^{m^\prime} q^{n^\prime})=p^{m-n+\max \lbrace n,m^\prime  \rbrace}  q^{m^\prime -n^\prime +\max \lbrace n,m^\prime  \rbrace}.$$
The set of idempotents of $  \mathcal{C}$ is $ E_{  \mathcal{C}}=\lbrace p^nq^n : n= 0, 1, ...      \rbrace $, which is totally ordered with the following order,
$$p^nq^n \leq p^mq^m \Longleftrightarrow m \leq n.  $$

Suppose  $ \ell^1(E_{\mathcal{C}}) $ acts on $ \ell^1(\mathcal{C}) $ trivially from left and by multiplication from right. Consider the equivalence relation  on $ \ell^1(\mathcal{C}) $  as in (\ref {equ}). In this case, it is shown in \cite {ABE} that $ \mathcal{C}/\sim  $ is isomorphic to $ \mathbb{Z} $ and hence we have
$$   \dfrac{\ell^1(\mathcal{C}) }{J} \simeq \ell^1(\mathcal{C}/\sim) \simeq  \ell^1( \mathbb{Z}).$$
\begin{pro} Let $ X $ be a  commutative Banach $ \ell^1(\mathcal{C})$-$\ell^1(E_{\mathcal{C}})$-bimodule satisfying the assumptions of Corollary \ref {cor1}.
 Then, for each $ k \in \mathbb{N} $, $$ \mathcal{H}^k_{\ell^1(E_{\mathcal{C}})} (\ell^1(\mathcal{C}) ),X^*)=0. $$
\end{pro}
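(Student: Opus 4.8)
The plan is to reduce everything, via the machinery of Section 2, to the Hochschild cohomology of $\ell^1(\mathbb{Z})$ and then exploit amenability of $\mathbb{Z}$. The work has essentially already been done; what remains is a short chain of identifications.

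First I would invoke Corollary \ref{cor1}: since $X$ is assumed to satisfy the hypotheses of that corollary (in particular $J=J_0$ for $\mathcal{C}$, which follows from the description of $J$ given in \cite{ABE}, $\ell^1(\mathcal{C})/J$ is unital, and the left action of $\ell^1(E_{\mathcal{C}})$ on both $\ell^1(\mathcal{C})$ and $X$ is trivial via the augmentation character $f$), we obtain an isometric isomorphism
$$ \mathcal{H}^k_{\ell^1(E_{\mathcal{C}})}(\ell^1(\mathcal{C}),X^*)\simeq \mathcal{H}^k\Big(\tfrac{\ell^1(\mathcal{C})}{J},X^*\Big) $$
for every $k\in\mathbb{N}$, where $X^*$ is viewed as a Banach $\tfrac{\ell^1(\mathcal{C})}{J}$-bimodule by Remark \ref{r1}. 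Next I would use the identification recalled just before the statement, namely $\tfrac{\ell^1(\mathcal{C})}{J}\simeq \ell^1(\mathcal{C}/\!\sim)\simeq \ell^1(\mathbb{Z})$ as Banach algebras, to rewrite the right-hand side as $\mathcal{H}^k(\ell^1(\mathbb{Z}),X^*)$, now with $X^*$ a Banach $\ell^1(\mathbb{Z})$-bimodule.

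Finally, since $\mathbb{Z}$ is an amenable discrete group, Johnson's theorem \cite{J} shows $\ell^1(\mathbb{Z})$ is an amenable Banach algebra, so $\mathcal{H}^1(\ell^1(\mathbb{Z}),Y^*)=0$ for every Banach $\ell^1(\mathbb{Z})$-bimodule $Y$; applying the classical reduction-of-dimension argument (the Hochschild analogue of Proposition \ref{red}, using that $\mathcal{L}^{k-1}(\ell^1(\mathbb{Z}),X^*)\simeq(\ell^1(\mathbb{Z})^{\hat\otimes(k-1)}\hat\otimes X)^*$ is again a dual module) then gives $\mathcal{H}^k(\ell^1(\mathbb{Z}),X^*)=0$ for all $k\ge 1$. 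Chaining the isomorphisms yields $\mathcal{H}^k_{\ell^1(E_{\mathcal{C}})}(\ell^1(\mathcal{C}),X^*)=0$ for each $k\in\mathbb{N}$.

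I do not expect a serious obstacle here: the content is concentrated in Corollary \ref{cor1} and in the group-algebra identification $\ell^1(\mathcal{C})/J\simeq\ell^1(\mathbb{Z})$, both of which are available. The only point requiring a moment's care is checking that the hypotheses of Corollary \ref{cor1} really do hold for the specific module action on $\ell^1(\mathcal{C})$ (trivial left action of $\ell^1(E_{\mathcal{C}})$, $J=J_0$, and $\ell^1(\mathbb{Z})$ being a unital $\mathfrak{L}_1$-space), after which the vanishing is an immediate consequence of the amenability of $\mathbb{Z}$.
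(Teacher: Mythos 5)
Your proposal is correct and follows essentially the same route as the paper: apply Corollary \ref{cor1}, identify $\ell^1(\mathcal{C})/J$ with $\ell^1(\mathbb{Z})$, and conclude by amenability of $\mathbb{Z}$. You are merely a bit more explicit than the paper in verifying the hypotheses of Corollary \ref{cor1} and in noting that vanishing in degrees $k\geq 2$ uses the standard fact that amenability kills all higher Hochschild cohomology with dual coefficients.
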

\begin{proof}
Since $ \dfrac{\ell^1(\mathcal{C} )}{J} =\ell^1(\mathbb{Z} )  $ is an $ \mathfrak{L}_{1 }$-space, by Corollary \ref{cor1}, 
$$
\mathcal{H}^k_{\ell^1(E_{\mathcal{C}})} (\ell^1(\mathcal{C} ), X^*)
\simeq{H}^k  (\dfrac{\ell^1(\mathcal{C} )}{J}, X^* )
\simeq{H}^k (\ell^1(\mathbb{Z} ),X^*)=0 , 
$$
where the last equality follows from amenability of $ \mathbb{Z}  $.
\end{proof}

Next we generalize \cite[Corollary 3.5]{NP} from Clifford semigroups to arbitrary inverse semigroups. 

Let  $ \Omega $ be a locally compact Hausdorff space. Then the character space of    $ C_0(\Omega)^{**} $   is called  the hyper-Stonean envelope of $ \Omega $ and  is denoted by $ \tilde{\Omega}$. By the Gelfand transform, $ C(\tilde{\Omega})$ is isometrically isomorphic to $ (C_0(\Omega))^{**} $ and therefore $  M(\Omega)^*\simeq  (C_0(\Omega))^{**} \simeq C(\tilde{\Omega}) $, thus
$$ M(\Omega)^{**} \simeq C(\tilde{\Omega})^* \simeq M(\tilde{\Omega}).$$

\begin{pro}\cite [Proposition 5.7]{DLS}\label{1.4}
Let  $ G $ be a locally compact group and let $ X $ be a Banach $  C_0(G)$-submodule of $  M(G) $. Suppose that  the character space of the commutative
C*-algebra $ X^* $ is $ \Phi_{X}  $. Then,
\begin{enumerate}
\item[(i)] $\Phi_{X}  $ is a clopen subset of $ \tilde{G} $,
\item[(ii)]If  $ X $ is a subalgebra (ideal) of the Banach algebra $ (M(G),\ast )$, then \\
$ (X^{**},\square)\simeq (M(\Phi_{X}),\square)  $ is a closed subalgebra (ideal) of $ (M(\tilde{G})  ,\square) $.
\end{enumerate}
\end{pro}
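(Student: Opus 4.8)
The plan is to transfer everything to the commutative von Neumann algebra $C(\tilde{G})=C_0(G)^{**}$ by passing to the annihilator of $X$, and to use the $C_0(G)$-module structure to recognise this annihilator as a weak*-closed ideal; the structure theory of such ideals then delivers both parts. Since $X$ is a norm-closed subspace of $M(G)=C_0(G)^*$, duality gives an isometric identification $X^{*}\simeq M(G)^{*}/X^{\perp}=C_0(G)^{**}/X^{\perp}=C(\tilde{G})/X^{\perp}$, where $X^{\perp}=\{F\in C(\tilde{G}):\langle F,x\rangle=0\ \text{for all}\ x\in X\}$; being the annihilator of a subset of the predual $M(G)$ of $C(\tilde{G})$, the subspace $X^{\perp}$ is weak*-closed.

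The crucial step is to promote $X^{\perp}$ to an ideal of $C(\tilde{G})$. The $C_0(G)$-module action on $M(G)=C_0(G)^{*}$ dualises to an action on $C(\tilde{G})=C_0(G)^{**}$ that is implemented by multiplication by elements of $C_0(G)\subseteq C(\tilde{G})$; since $\varphi\cdot x\in X$ for every $\varphi\in C_0(G)$ and $x\in X$, it follows that $\varphi F\in X^{\perp}$ whenever $F\in X^{\perp}$, so $X^{\perp}$ is a $C_0(G)$-submodule of $C(\tilde{G})$. Now $C_0(G)$ is weak*-dense in its bidual $C(\tilde{G})$ and the product of the von Neumann algebra $C(\tilde{G})$ is separately weak*-continuous; approximating an arbitrary $H\in C(\tilde{G})$ weak* by a bounded net in $C_0(G)$ (Goldstine) and using that $X^{\perp}$ is weak*-closed yields $HF\in X^{\perp}$ for all $F\in X^{\perp}$. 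Hence $X^{\perp}$ is a weak*-closed ideal of the commutative von Neumann algebra $C(\tilde{G})$ (automatically a closed $\ast$-ideal), so that $X^{*}=C(\tilde{G})/X^{\perp}$ genuinely carries the structure of a commutative C*-algebra, as implicit in the statement.

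For (i), recall that every weak*-closed ideal of a von Neumann algebra has the form $zC(\tilde{G})$ for a central projection $z$; as $C(\tilde{G})$ is commutative and $\tilde{G}$ is Stonean, $z=\chi_{K}$ for a clopen set $K\subseteq\tilde{G}$. Then $X^{\perp}=\{F\in C(\tilde{G}):F|_{\tilde{G}\setminus K}=0\}$ and restriction induces an isometric $\ast$-isomorphism $X^{*}=C(\tilde{G})/X^{\perp}\simeq C(\tilde{G}\setminus K)$; reading off Gelfand spaces identifies $\Phi_{X}$ with the clopen set $\tilde{G}\setminus K\subseteq\tilde{G}$, which is (i). For (ii), suppose $X$ is a closed subalgebra (resp. closed ideal) of $(M(G),\ast)$. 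By the standard behaviour of the first Arens product under biduality, the adjoint of the inclusion $X\hookrightarrow M(G)$ embeds $(X^{**},\square)$ onto the weak*-closure of $X$ in $(M(G)^{**},\square)$ as a closed subalgebra (resp. closed ideal). It remains to transport $\square$ along the Banach-space identifications $M(G)^{**}\simeq C(\tilde{G})^{*}\simeq M(\tilde{G})$ and, using (i), $X^{**}=(X^{*})^{*}\simeq C(\Phi_{X})^{*}\simeq M(\Phi_{X})$, which gives $(X^{**},\square)\simeq(M(\Phi_{X}),\square)$ sitting as a closed subalgebra (resp. ideal) of $(M(\tilde{G}),\square)$.

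The step I expect to be the main obstacle is the promotion "$C_0(G)$-submodule $\Rightarrow$ ideal of $C(\tilde{G})$": one must check that the dualised module action really is multiplication in the enveloping von Neumann algebra, and that weak*-density of $C_0(G)$ combined with separate weak*-continuity of the product (applied to a bounded approximating net, exploiting the weak*-closedness of $X^{\perp}$) legitimately closes the gap. A secondary point requiring care in (ii) is that each identification used there ($M(G)^{**}\cong M(\tilde{G})$, $X^{**}\cong M(\Phi_{X})$, and the embedding of one into the other) respects the Arens product and not merely the Banach-space structure, so that the final isomorphism is one of Banach algebras.
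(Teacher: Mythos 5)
The paper offers no proof of this statement: it is imported verbatim as \cite[Proposition 5.7]{DLS}, so there is no in-paper argument to compare against. Your reconstruction is correct and follows the natural (and, as far as the Dales--Lau--Strauss development goes, essentially standard) route: identify $X^*$ with $C(\tilde G)/X^\perp$, promote the weak*-closed annihilator $X^\perp$ to a weak*-closed ideal of the commutative von Neumann algebra $C(\tilde G)$ via the dualised module action together with Goldstine and separate weak*-continuity of the product, write that ideal as $zC(\tilde G)$ for a central projection $z=\chi_K$ with $K$ clopen, and read off $\Phi_X=\tilde G\setminus K$. The two points you flag as delicate are indeed the right ones and both go through: the second-dual module action of $C_0(G)$ on $C(\tilde G)$ agrees with multiplication in $C(\tilde G)$ because $C_0(G)$ is Arens regular (being a C*-algebra), and in part (ii) the bitranspose of the inclusion $X\hookrightarrow M(G)$ is a homomorphism for the first Arens products and maps onto the weak*-closed subspace $X^{\perp\perp}=M(\Phi_X)$ of $M(\tilde G)$, which is a subalgebra (resp.\ ideal) by the usual bounded-net approximation argument using the one-sided weak*-continuity of $\square$. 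Your argument also justifies, rather than assumes, that $X^*$ is a commutative C*-algebra, which the statement takes for granted.
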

\begin{cor} \label{1.5}
Let  $ G $ be a locally compact group. Then $( L^1(G)^{**},\square )\simeq (M(\Phi),\square)$ is an ideal of $ (M(\tilde{G})  ,\square) $, where $\Phi $ is the character space of $ L^\infty(G) $.
\end{cor}
\begin{proof}
If $ f \in L^1(G) $ and $ \varphi \in C_0(G) $ and $ fm $ is the image   of $  f$ in $  M(G) $, where $ m $  is the  Haar measure, then the actions of $  C_0(G)$ on  $   L^1(G) $  is defined by
$$f \cdot \varphi :=(fm) \centerdot \varphi, \qquad   \qquad \varphi  \cdot f := \varphi \centerdot (fm),$$
where  $ \centerdot $ is the original action of $  C_0(G)$ on $  M(G) $. Thus $ L^1(G) $ is a Banach $  C_0(G)$-submodule of $  M(G) $, and one may apply Proposition \ref{1.4}.
\end{proof}
\begin{cor}\label{1.6}
Let  $ G $ be a locally compact group. Then for every $  n \in \mathbb{N} $, we have $ L^1(G)^{(2n)}\simeq M(K_n) $, for some compact Hausdorff space $ K_n $. Also $ M(K_n) $ is a Banach $ L^1(G)  $-bimodule.
\end{cor}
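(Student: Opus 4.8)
The statement to prove is Corollary~\ref{1.6}: for a locally compact group $G$ and every $n \in \mathbb{N}$, one has $L^1(G)^{(2n)} \simeq M(K_n)$ for some compact Hausdorff space $K_n$, and moreover $M(K_n)$ is a Banach $L^1(G)$-bimodule. The plan is to proceed by induction on $n$, using Corollary~\ref{1.5} as the base case and iterating the hyper-Stonean envelope construction recalled just before Proposition~\ref{1.4}.

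\emph{Base case $n=1$.} By Corollary~\ref{1.5}, $L^1(G)^{**} \simeq M(\Phi)$, where $\Phi$ is the character space of the commutative C*-algebra $L^\infty(G) = L^1(G)^*$. Since $\Phi$ is a clopen subset of the hyper-Stonean envelope $\tilde{G}$ (Proposition~\ref{1.4}(i)), and $\tilde{G}$ is compact Hausdorff (being the character space of the unital C*-algebra $C_0(G)^{**} = C(\tilde G)$), the set $\Phi$ is itself compact Hausdorff. Setting $K_1 := \Phi$ gives $L^1(G)^{(2)} \simeq M(K_1)$, and Corollary~\ref{1.5} also records that this is an ideal in $(M(\tilde G),\square)$, hence in particular a Banach $L^1(G)$-bimodule via the natural action.

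\emph{Inductive step.} Suppose $L^1(G)^{(2n)} \simeq M(K_n)$ with $K_n$ compact Hausdorff and $M(K_n)$ a Banach $L^1(G)$-bimodule. Then $L^1(G)^{(2n+1)} \simeq M(K_n)^* \simeq C(\tilde K_n)$, where $\tilde K_n$ is the hyper-Stonean envelope of $K_n$ (using the identification $M(\Omega)^* \simeq C(\tilde\Omega)$ recalled in the excerpt, valid here since $K_n$ is compact, hence locally compact). Dualizing once more, $L^1(G)^{(2n+2)} \simeq C(\tilde K_n)^* \simeq M(\tilde K_n)$, so we may take $K_{n+1} := \tilde K_n$, which is compact Hausdorff as the character space of a commutative unital C*-algebra. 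For the module structure, one transports the $L^1(G)$-bimodule action through the successive dual pairings: since taking duals of a Banach $L^1(G)$-bimodule yields again a Banach $L^1(G)$-bimodule with the canonical (adjoint) action, both $L^1(G)^{(2n+1)}$ and $L^1(G)^{(2n+2)}$ inherit such a structure, and the isometric isomorphisms above are $L^1(G)$-bimodule isomorphisms when the actions on the $M(\cdot)$ and $C(\cdot)$ spaces are defined to make them so. This closes the induction.

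\emph{Expected main obstacle.} The nontrivial point is not the topological identification $L^1(G)^{(2n)} \simeq M(K_n)$ — that follows cleanly by iterating the Gelfand–hyper-Stonean machinery — but rather checking that the module actions are compatible with these identifications at each stage, i.e. that the isometric isomorphisms can be chosen to be $L^1(G)$-bimodule maps and that the resulting action on $M(K_n)$ is the natural measure-algebra action. For $n=1$ this is exactly the content of Corollary~\ref{1.5} (the ideal structure in $M(\tilde G)$), and for the inductive step one must verify that the canonical bidual action on $M(K_n)^{**}$ corresponds, under $M(K_n)^{**} \simeq M(\tilde K_n)$, to the natural action making $M(\tilde K_n)$ a Banach $L^1(G)$-bimodule. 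This is routine but is the step that requires care; it is handled by the same reasoning as in Proposition~\ref{1.4}(ii) and Corollary~\ref{1.5}, applied with $M(K_n)$ in place of $M(G)$.
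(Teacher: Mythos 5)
Your proposal is correct and follows essentially the same route as the paper: the base case is Corollary~\ref{1.5}, and the inductive step iterates $M(K_n)^{**}\simeq M(\tilde K_n)$ via the hyper-Stonean envelope, with the $L^1(G)$-bimodule structure transported through the canonical identifications. The paper is in fact slightly more casual about the module structure (it simply transfers the canonical bimodule structure of $L^1(G)^{(2n)}$ across the isomorphism), so your extra care there is harmless but not required for the statement as phrased.
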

\begin{proof}
By  Corollary \ref{1.5}, $( L^1(G)^{**},\square )\simeq (M(\Phi),\square)$, where $\Phi $ is the character space of $ L^\infty(G) $. Also $ (M(\Phi),\square)^{**}\simeq (M(\tilde{\Phi}),\square) $, where $ \tilde{\Phi} $ is the hyper-Stonean envelope of $ \Phi $. Thus 
$$ (L^1(G)^{(4)},\square )\simeq ( L^1(G)^{**},\square )^{**}\simeq (M(\Phi),\square)^{**}\simeq (M(\tilde{\Phi}),\square).$$ 
Inductively, one could show that, for each $ n\in \mathbb{N} $, $ L^1(G)^{(2n)}\simeq M(K_n), $ for some compact  Hausdorff space $ K_n $. Now the canonical Banach $ L^1(G)  $-bimodule structure of $ L^1(G)^{(2n)} $  gives
$ M(K_n) $ a Banach module structure.
\end{proof}
\begin{cor}\label{2n}
Let  $ G $ be a locally compact group. Then for every $  n \in \mathbb{N} $, \\
$ L^1(G)^{(2n)}\simeq M(K_n)$ is an $ \mathfrak{L}_{1 }$-space.
\end{cor}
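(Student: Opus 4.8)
The plan is to read the result off from Corollary~\ref{1.6} together with the list of standard examples of $\mathfrak{L}_1$-spaces recorded in Section~\ref{sec:1}. By Corollary~\ref{1.6}, for each $n\in\mathbb{N}$ there is a compact Hausdorff space $K_n$ with $L^1(G)^{(2n)}\simeq M(K_n)$. Since $M(K_n)$ is, by the Riesz representation theorem, the dual $C(K_n)^*$ of the commutative C$^*$-algebra $C(K_n)$, and since $C(K)^*$ is an $\mathfrak{L}_1$-space for every compact space $K$ (one of the two basic examples quoted right after the definition of $\mathfrak{L}_{1,\lambda}$-spaces), the space $M(K_n)$ is an $\mathfrak{L}_1$-space.

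It then remains only to transport this property along the isomorphism $L^1(G)^{(2n)}\simeq M(K_n)$, and for this one uses that the class of $\mathfrak{L}_1$-spaces is stable under Banach-space isomorphism. Concretely, if $T\colon X\to Y$ is an isomorphism and $X$ is an $\mathfrak{L}_{1,\lambda}$-space, then, given a finite-dimensional $M\subseteq Y$, one feeds $T^{-1}(M)$ into the defining property of $X$ and pushes the resulting finite-dimensional subspace forward through $T$; a routine estimate then shows that $Y$ is an $\mathfrak{L}_{1,\mu}$-space with $\mu=\lambda\,\Vert T\Vert\,\Vert T^{-1}\Vert>1$. Taking $X=M(K_n)$ and $Y=L^1(G)^{(2n)}$ finishes the proof. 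If, as is actually the case here, the isomorphism of Corollary~\ref{1.6} is isometric, this paragraph is unnecessary.

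I do not expect any genuine obstacle: the statement is a bookkeeping consequence of the identifications already carried out in Corollaries~\ref{1.5} and~\ref{1.6}. An alternative, self-contained route that bypasses those corollaries starts from the fact that $L^1(G)=L^1(m)$ is itself an $\mathfrak{L}_1$-space and invokes the classical theorem of Lindenstrauss and Rosenthal that a Banach space $X$ is an $\mathfrak{L}_1$-space if and only if its bidual $X^{**}$ is; an induction on $n$ then yields the claim. In either approach, the only point deserving a word of care is that one should work with the isomorphic invariant ``$\mathfrak{L}_{1,\lambda}$ for some $\lambda>1$'' rather than with a fixed constant $\lambda$.
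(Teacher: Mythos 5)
Your proof is correct and takes essentially the same route as the paper, which deduces the claim from Corollary~\ref{1.6} together with the standard fact (cited there as an exercise in Ryan's book) that $M(K)\simeq C(K)^*$ is an $\mathfrak{L}_1$-space for every compact Hausdorff $K$. The additional remarks on isomorphism-invariance and the Lindenstrauss--Rosenthal alternative are sound but not needed.
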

\begin{proof}
It is immediate from \cite[Exercise 2.11]{Ry}.
\end{proof}
\begin{thm}
Let $ S $ be an inverse semigroup with the set of idempotents $ E $. Let $\ell^1(E)  $ act on $\ell^1(S)  $ by multiplication from right and trivially from left. Then, for each $ n \in \mathbb{N} \cup \lbrace 0 \rbrace$,
$\mathcal{H}^2_{\ell^1(E)} (\ell^1(S),\ell^1(G_S)^{(2n+1)})$ is a Banach space.
\end{thm}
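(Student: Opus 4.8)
The plan is to reduce the module cohomology on the left to the classical Hochschild cohomology of $\ell^1(G_S)$ with coefficients in one of its odd duals, via Corollary \ref{cor1}, and then to quote the known result of Proposition \ref{dr} for the discrete group $G_S$.

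First I would check that the standing hypotheses of Section \ref{sec:1} and of Corollary \ref{cor1} hold for $\mathcal{A}=\ell^1(S)$ and $\mathfrak{A}=\ell^1(E)$ acting as in (\ref{act}). The left action of $\ell^1(E)$ on $\ell^1(S)$ is trivial via the augmentation character $f$; the quotient $\ell^1(S)/J\simeq\ell^1(G_S)$ is unital (since $G_S$ is a discrete group) and is an $\mathfrak{L}_{1}$-space; and $J=J_0$ here, because for $s,t\in S$ and $e\in E$ one has $et=t(t^*et)$ with $t^*et\in E$ (idempotents of $S$ commute and $tt^*t=t$), whence $set=(st)(t^*et)$ and $\delta_{set}-\delta_{st}=\delta_{st}\ast\delta_{t^*et}-\delta_{st}\in J_0$, so $J\subseteq J_0$, the reverse inclusion being automatic. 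Recall moreover that $\ell^1(G_S)=\ell^1(S)/J$ is a commutative Banach $\ell^1(S)$-$\ell^1(E)$-bimodule, with the $\ell^1(S)$-action induced by multiplication and factoring through the quotient map $\ell^1(S)\to\ell^1(G_S)$ because $J$ is an ideal.

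Next, fix $n\in\mathbb{N}\cup\{0\}$ and put $X:=\ell^1(G_S)^{(2n)}$, so that $X^*=\ell^1(G_S)^{(2n+1)}$. For $n=0$, $X=\ell^1(G_S)$ is an $\mathfrak{L}_{1}$-space; for $n\geq 1$, applying Corollaries \ref{1.5}, \ref{1.6} and \ref{2n} to the discrete group $G_S$ gives $\ell^1(G_S)^{(2n)}\simeq M(K_n)$ for a compact Hausdorff space $K_n$, again an $\mathfrak{L}_{1}$-space. In either case $X$ carries the canonical even-dual module structure of the commutative bimodule $\ell^1(G_S)$, so it is a commutative Banach $\ell^1(S)$-$\ell^1(E)$-bimodule on which $\ell^1(E)$ still acts trivially from the left via $f$. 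All hypotheses of Corollary \ref{cor1} now being in force, we obtain
\[
\mathcal{H}^2_{\ell^1(E)}\big(\ell^1(S),\ell^1(G_S)^{(2n+1)}\big)=\mathcal{H}^2_{\ell^1(E)}\big(\ell^1(S),X^*\big)\simeq\mathcal{H}^2\Big(\frac{\ell^1(S)}{J},X^*\Big)\simeq\mathcal{H}^2\big(\ell^1(G_S),\ell^1(G_S)^{(2n+1)}\big)
\]
isometrically. By Proposition \ref{dr}, applied with $G=G_S$, the space on the right is a Banach space, and since the isometric isomorphism of Corollary \ref{cor1} carries the subspace of coboundaries onto the subspace of coboundaries, the space on the left is a Banach space as well. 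The only thing requiring care is this bookkeeping of hypotheses — in particular verifying that $X=\ell^1(G_S)^{(2n)}$ is an $\mathfrak{L}_{1}$-space, which is exactly where the chain Corollaries \ref{1.5}--\ref{2n} and the discreteness of $G_S$ enter, and that its $\ell^1(S)$-module structure is the one factoring through $\ell^1(G_S)$; once this is in place, the Banach-space conclusion is immediate from Proposition \ref{dr}.
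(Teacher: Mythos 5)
Your proposal is correct and follows essentially the same route as the paper: verify $J=J_0$ via the identity $\delta_{set}-\delta_{st}=\delta_{st}\ast\delta_{t^*et}-\delta_{st}$ with $t^*et\in E$, check that $X=\ell^1(G_S)^{(2n)}$ is an $\mathfrak{L}_{1}$-space (trivially for $n=0$, and via the chain of Corollaries \ref{1.5}--\ref{2n} for $n\geq 1$), apply Corollary \ref{cor1} to identify the module cohomology with $\mathcal{H}^2(\ell^1(G_S),\ell^1(G_S)^{(2n+1)})$, and conclude with Proposition \ref{dr}. The only cosmetic difference is that the paper treats $n=0$ as a separate case; your bookkeeping of hypotheses is, if anything, slightly more explicit than the paper's.
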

\begin{proof}
Fix  $ n \in \mathbb{N} \cup \lbrace 0 \rbrace$, and put $ X=\ell^1(G_S)^{(2n)}  $. Since the left action of  $\ell^1(E)  $  on  $ \ell^1(G_S) $  is trivial, so is the left action of  $\ell^1(E)  $  on $ X $. The corresponding ideal $ J $ is spanned by the elements of the form $  \delta_{set}- \delta_{st}$,  with  $ s,t \in S, e\in E $. Since $$  \delta_{set}- \delta_{st}=  \delta_{sett^*t}- \delta_{st} = \delta_{stt^*et}- \delta_{st},$$
and $ t^*et $ is an idempotent, say $ e^\prime $, we have $  \delta_{set}- \delta_{st}=\delta_{re^\prime }- \delta_r, $ for $ r=st. $ Thus the elements of the form $ \delta_r\cdot \delta_{e^\prime }- f(e^\prime)\delta_r $ span $ J $, where $ f $ is the augmentation character on $\ell^1(E)  $.

Next, for $ n=0 $, the Banach space $\frac{\ell^1(S)}{J} \simeq\ell^1(G_S)  $ is a unital Banach algebra and an $ \mathfrak{L}_{1 }$-space. Also $ \frac{\ell^1(S)}{J}\hat{\otimes} \ell^1(G_S) \simeq \ell^1(G_S \times G_S)   $ is an $ \mathfrak{L}_{1 }$-space.
 Therefore, by   Corollary \ref {cor1},
\begin{align*}
\mathcal{H}^2_{\ell^1(E)} (\ell^1(S),\ell^\infty(G_S))
&\simeq \mathcal{H}^2 (\frac{\ell^1(S)}{J},\ell^\infty(G_S))  \\
&\simeq\mathcal{H}^2 (\ell^1(G_S),\ell^\infty(G_S)).
\end{align*}
 By   Proposition \ref {dr}, the last space is a Banach space.

 For $ n \geq 1 $, by Corollary \ref {2n}, $X  $ is an $ \mathfrak{L}_{1 }$-space. Hence, by Corollary \ref {cor1},
$$
\mathcal{H}^2_{\ell^1(E)} (\ell^1(S),X^*)
\simeq \mathcal{H}^2 (\frac{\ell^1(S)}{J},X^*)  \\
\simeq\mathcal{H}^2 (\ell^1(G_S),(\ell^1(G_S))^{(2n+1)}).
$$
Again, by   Proposition \ref {dr}, the last space is a Banach space. 
  \end{proof}


\begin{thebibliography}{}
%
%
\bibitem{A}
 Amini, M.:  Module amenability  for semigroup algebras,  Semigroup Forum 69, 243-254 (2004)
\bibitem{ABE}
 Amini, M.,  Bodaghi A.,
 Ebrahimi Bagha, D.: Module amenability of the second dual and module
topological center of semigroup algebras,  Semigroup Forum 80, 302-312 (2010)
\bibitem{BD}
 Bonsall, F.,   Duncan, J.:  Complete Normed Algebra, Springer-Verlag, Berlin-Heidelberg-New York (1973)
\bibitem{BD2} 
 Bowling, S.,  Duncan, J.: First order cohomology of Banach semigroup algebras,  Semigroup Forum 56, 130-145 (1998)
\bibitem{CGW}
 Choi, Y.,  Gourdeau, F.,    White, M. C.: Simplicial cohomology of band semigroup algebras,  Proc. Roy. Soc. Edinburgh  142, 715-744 (2012)
\bibitem{D}
 Dales, H. G.:  Banach Algebras and Automatic Continuity, Clarendon Press, Oxford  (2000)
\bibitem{DD}
 Dales, H. G.,   Duncan, J.:  Second order cohomology in groups of some semigroup algebras, In:  Banach Algebra '97 (Blaubeuren) 101-117, Walter de Gruyter, Berlin  (1998)
\bibitem{DLS}
 Dales, H. G.,  Lau, A. T.-M, Strauss, D.:  Second Duals of Measure Algebras, Dissertationes Mathematicae (Rozprawy Matematyczne) 481, 122 pp.  (2011)
\bibitem{dunk}
  Duncan, J.,  Namioka, I.: Amenability of inverse semigroups and their semigroup algebras.   Proc. Roy. Soc. Edinburgh 80, 309-321 (1978)
\bibitem{EA}
 Ebrahimi Bagha, D., Amini, M.: Module derivation problem for inverse semigroups,  Semigroup Forum 85, 525-532  (2012)
\bibitem{GPW}
 Gourdeau, F.,  Pourabbas, A.,    White, M. C.: Simplicial cohomology of  some semigroup
algebras,  Canad. Math. Bull.  50, no. 1, 56-70 (2007)
\bibitem{H}
 Helemskii, A. Ya.:  The Homology of Banach and Topological Algebras, Kluwer Academic Publishers, Dordrecht (1986)
\bibitem{I}
 Ivanov, N. V.: Second bounded cohomology group,  J. Soviet Math.  167, 117-120 (1988)
\bibitem{J}
 Johnson, B. E.:  Cohomology in Banach Algebras, Mem. Amer. Math. Soc. 127, Amer. Math. Soc., Providence (1972)
\bibitem{LT}
 Lindenstrauss, J.,  Tzafriri, L.:  Classical Banach Spaces, vols. I, II, Springer-Verlag, Berlin (1991)
\bibitem{MM}
 Matsumoto, S.,  Morita, S.: Bounded cohomology  of certain  groups of homeomorphisms,  Proc.  Amer. Math. Soc. 94, 539-544 (1985)
\bibitem{NP2}
 Nasrabadi, E.,
  Pourabbas, A.: Second module cohomology group of inverse semigroup algebras,  Semigroup Forum 81, 269-276 (2010)
\bibitem{NP}
 Nasrabadi, E.,
  Pourabbas, A.:  Module cohomology group of inverse
semigroup algebras,   Bull. Irannian Math. Soc.  4, 157-168 (2011)
\bibitem{P}
  Pourabbas, A.: Second cohomology group of group algebras with coefficients in iterated duals,   Proc. Amer. Math. Soc.  132, 1403-1410 (2003)
\bibitem{RASE}
 Rezavand, R.,  Amini, M.,  Sattari, M.H.,  Ebrahimi Bagha, D.:  Module Arens regularity for semigroup
algebras,   Semigroup Forum 77, 300-305 (2008)
\bibitem{Rif}
  Rieffel, M. A.: Induced Banach representations of Banach algebras and locally compact groups,  J.  Func. Anal.   1, 443-491 (1967)
\bibitem{R}
 Runde, V.:   Lectures on Amenability, Lecture Notes in Mathematics 1774, Springer-Verlag, Berlin (2002)
\bibitem{Ry}
 Ryan, R. A.:   Introduction to
Tensor Products of
Banach Spaces,  Springer-Verlag, London (2002)
\bibitem{sh}
 Hungerford, T. W.:   Algebra,  Springer-Verlag, New York (1974)
\end{thebibliography}
\end{document}